\newcommand{\excise}[1]{}
\newcommand{\ve}{\boldsymbol}
\newtheorem{thm}{Theorem}
\newtheorem{lemma}[thm]{Lemma}
\newtheorem{cor}[thm]{Corollary}
\newtheorem{prop}[thm]{Proposition}
\theoremstyle{definition}
\newcommand{\be}{\begin{eqnarray}}
\newcommand{\bea}{\begin{eqnarray*}}
\newcommand{\ee}{\end{eqnarray}}
\newcommand{\eea}{\end{eqnarray*}}
\newcommand{\ring}[1]{\ensuremath{\mathbb{#1}}}
\newcommand{\conv}{\mathrm{conv}}
\newcommand{\vol}{\mathrm{vol}}
\newcommand{\rank}{\mathrm{rank}}
\newcommand{\R}{\mathbb R}
\newcommand{\Z}{\mathbb Z}
\newcommand{\sg}{{Sg}}
\newcommand\sail{{E}}
\newcommand{\Sp}{{H}}
\newcommand{\KP}{{P}}
\newcommand\CP{{C}}
\newcommand\ZZ{\ring{Z}}
\newcommand\xx{{\mathbf x}}
\newcommand\yy{{\mathbf y}}
\newcommand\bb{{\mathbf b}}
\newcommand\vv[1]{{\mathbf{#1}}}
\newcommand\qq{{\boldsymbol q}}
\newcommand\supp{\mathrm{supp}}
\newcommand\icone{{Sg}}
\newcommand{\modulo}{\,\mathrm{mod}\;}
\global\long\def\xx{\ve x}
\global\long\def\yy{\ve y}
\global\long\def\bb{\ve b}
\global\long\def\cc{\ve c}
\global\long\def\uu{\ve u}
\global\long\def\vv{\ve v}
\global\long\def\zero{\mathbf{0}}
\global\long\def\ol#1{\bar{#1}}
\global\long\def\zz{\ve z}
\global\long\def\oneone{\ve 1}
\global\long\def\one{\mathbf{1}}
\global\long\def\t{\top}
\def\udots{\mathinner{\mkern1mu\raise1pt\vbox{\kern7pt\hbox{.}}\mkern2mu
\raise4pt\hbox{.}\mkern2mu\raise7pt\hbox{.}\mkern1mu}}
\def\ol#1{{\overline {#1}}}
\title[Distance-sparsity transference]{Distance-sparsity transference for vertices of  corner polyhedra}
\author[I. Aliev, M. Celaya, M. Henk and A. Williams]{Iskander Aliev, Marcel Celaya, Martin Henk and Aled Williams}
\date{\today }
\begin{document}
\maketitle

\begin{abstract}
\noindent

We obtain a transference bound for vertices of corner polyhedra that connects two well-established areas of research: proximity and sparsity of solutions to integer programs. In the knapsack scenario, it implies that for any vertex ${\ve x}^*$ of an integer feasible knapsack polytope $\KP({\ve a}, { b})=\{{\ve x}\in \R^n_{\ge 0}: {\ve a}^T{\ve x}={ b}\}$, ${\ve a}\in \Z^n_{>0}$, there exists an integer point ${\ve z}^*\in \KP({\ve a}, { b})$ such that, denoting by $s$ the size of support of ${\ve z}^*$ and assuming $s>0$, 
\begin{equation}\label{abstract}
\|{\ve x}^{*}-{\ve z}^{*}\|_{\infty} \,\frac{2^{s-1}}{s} < \|{\ve a}\|_{\infty}\,,
\end{equation}
where $\|\cdot\|_{\infty}$ stands for the $\ell_{\infty}$-norm. The bound \eqref{abstract} gives an exponential in $s$ improvement on previously known proximity estimates. In addition, for general integer linear programs we obtain a resembling result that connects the minimum absolute nonzero entry of an optimal solution with the size of its support.

\end{abstract}

\section{Introduction and statement of results}

The main contribution of this paper shows a surprising relation that holds between two well-established areas of research, proximity and sparsity of solutions to integer programs, in the case of  Gomory's corner polyhedra.

The proximity-type results provide estimates for the distance between optimal vertex solutions of linear programming relaxations and feasible integer points, with seminal works by Cook et al. \cite{MR839604} and, more recently, by Eisenbrand and Weismantel \cite{MR3775840}.
The sparsity-type results, in their turn, provide bounds for the size of support of feasible integer points and solutions to integer programs. Bounds of this type are dated back to the classical integer Carath\'eodory theorems of Cook, Fonlupt and Schriver \cite{MR830593} and Seb\H{o} \cite{Sebo}. More recent contributions include results of Eisenbrand and Shmonin \cite{EisenbrandShmonin2006} and Aliev et al. \cite{ADON2017,Support, AADO}.  Further, in a very recent work Lee, Paat, Stallknecht and Xu \cite{LPSX} apply new sparsity-type bounds to refine the bounds for proximity. 

To state the main results of this paper, we will need the following notation. Let $A \in \Z^{m\times n}$, $m<n$,
 and let $\tau=\{i_1,\ldots,i_k\} \subseteq  \{1,\ldots,n\}$ with $i_1<i_2<\cdots<i_k$. We will use the notation  $A_\tau$ for the $m \times k$ submatrix  of $A$ with columns indexed by $\tau$.  In the same manner, given ${\ve x}\in \R^n$, we will denote by ${\ve x}_{\tau}$ the vector $(x_{i_1}, \ldots, x_{i_k})^{\t}$. The complement of $\tau$ in $ \{1,\ldots, n\}$ will be denoted as $\bar\tau$. We will say that $\tau$ is a {\em basis} of $A$ if $|\tau|=m$ and the submatrix $A_{\tau}$  is nonsingular. By $\Sigma(A)$ we will denote the maximum absolute $m\times m$ subdeterminant of $A$:
\bea
\Sigma(A)=\max\{|\det(A_{\tau})|: \tau\subset \{1,\ldots, n\} \mbox{ with } |\tau|= m \}\,.
\eea
When $\Sigma(A)$ is positive, $\gcd(A)$ will denote the greatest common divisor of all $m \times m$ subdeterminants of $A$.

For ${\ve x}=(x_1, \ldots, x_n)^{\t}\in \R^n$, the $\ell_{\infty}$-norm of ${\ve x}$ will be denoted as $\|{\ve x}\|_{\infty}$.   We will denote by $\supp({\ve x})=\{i : x_i \not= 0\}$ the {\em support} of ${\ve x}$. Further, $\|{\ve x}\|_0:=|\supp({\ve x})|$ will denote the $0$-``norm'', widely used in the theory of {compressed sensing} \cite{candestao,CS_survey}, which counts the cardinality of the support of ${\ve x}$. 

Let $A \in \Z^{m\times n}$ with $m<n$ and $\ve b \in \Z^m $. We will consider the polyhedron
\bea
\KP(A, {\ve b})=\{{\ve x}\in \R^n_{\ge 0}: A{\ve x}={\ve b}\}
\eea
and take any vertex $\ve x^*$  of $\KP(A, {\ve b})$.  %
Without loss of generality, we may assume that
\be\label{matrix_partition}
A=(A_{\gamma}, A_{\bar \gamma})\in \Z^{m\times n} \mbox{ with nonsingular } A_{\gamma}\,,
\ee
\be\label{vertex_assumption}
 {\ve x}^*_\gamma = A_\gamma^{-1}{\ve b} \mbox{ and }{\ve x}^*_{\bar \gamma}={\ve 0}\,.
\ee
In general, there can be many choices for the basis $\gamma$. However, if ${\ve x}^*$ is \emph{nondegenerate}; that is, if the size of the support of ${\ve x}^*$ is $m$, then there is a unique choice for $\gamma$, namely $\gamma=\supp({\ve x})$.  For convenience, throughout this paper we will assume, given a choice for $\gamma$, that
\bea \gamma= \{1, \ldots, m\}\,.\eea
For a set ${S}\subset \R^n$ we will denote by $\conv({S})$ the {\em convex hull} of ${S}$.  Gomory  \cite{Gomory_polyhedra} introduced  the {\em corner polyhedron} $\CP_{\gamma}(A, {\ve b})$ associated with $\gamma$ as
\bea\label{corner_polyhedron}
\CP_{\gamma}(A, {\ve b})=\conv (\{{\ve x}\in \Z^n: A{\ve x}={\ve b},  {\ve x}_{\bar \gamma}\ge 0\})\,.
\eea
\begin{thm}\label{t:IPTP_general}
Suppose that  $A$ satisfies \eqref{matrix_partition}, ${\ve b}\in \Z^m$,  $\ve x^*$ is given by \eqref{vertex_assumption} and the corner polyhedron $\CP_{\gamma}(A, {\ve b})$ is nonempty. 
Let ${\ve z}^{*}$  be a vertex of  $\CP_{\gamma}(A, {\ve b})$ and let $r=\|{\ve z}^{*}_{\bar\gamma}\|_0$. Then 
\begin{equation}\label{eq:r_equals_0}
{\ve x}^{*}={\ve z}^{*} \mbox{ if } r= 0\,,
\end{equation}
\begin{equation}\label{eq:r_equals_1}
\|{\ve x}^{*}-{\ve z}^{*}\|_{\infty} \le \frac{\Sigma(A)}{\gcd(A)}-1 \mbox{ if } r= 1 \mbox{ and }
\end{equation}
\begin{equation}\label{eq:r_greater_1}
\|{\ve x}^{*}-{\ve z}^{*}\|_{\infty} \,\frac{2^{r}}{r} \leq \frac{\Sigma(A)}{\gcd(A)} \mbox{ if } r\ge 2\,.
\end{equation}
\end{thm}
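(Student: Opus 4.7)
The plan is to reduce the statement to a pigeonhole count in the finite group $G = \Z^m/A_\gamma \Z^m$, where the vertex property forces injectivity and then simultaneously yields both halves of $\|\ve x^* - \ve z^*\|_\infty$.

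As a first step I set $\tau = \supp(\ve z^*_{\bar\gamma})$ with $|\tau| = r$, abbreviate $\ve v = \ve z^*_\tau$, and introduce the recession sublattice $L = \{\ve \ell \in \Z^r : A_\tau \ve \ell \in A_\gamma \Z^m\}$. From $A_\gamma \ve z^*_\gamma + A_\tau \ve v = \ve b = A_\gamma\ve x^*_\gamma$ one reads off
\[
\ve x^* - \ve z^* = (A_\gamma^{-1}A_\tau\ve v,\, -\ve v,\, \ve 0),
\qquad
\|\ve x^* - \ve z^*\|_\infty = \max\bigl(\|A_\gamma^{-1}A_\tau\ve v\|_\infty,\; \|\ve v\|_\infty\bigr).
\]
A standard midpoint argument on integer points of $\CP_\gamma(A, \ve b)$ shows that $\ve z^*$ is a vertex if and only if no nonzero $\ve \ell \in L$ satisfies $|\ell_i| \le v_i$ for every $i$; the case $r = 0$ is then immediate, giving $\ve z^* = \ve x^*$.

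The pigeonhole step is to consider the map $\ve \alpha \mapsto A_\tau\ve \alpha + A_\gamma\Z^m$ on the box $\prod_{i=1}^r\{0,1,\ldots,v_i\}$: any collision produces a nonzero $\ve \ell \in L$ with $|\ell_i| \le v_i$, which is forbidden. The image lies in the subgroup $H \le G$ generated by the residues of the columns of $A_\tau$, and $H$ in turn lives inside $A\Z^n/A_\gamma\Z^m$, a group of order $[A\Z^n : A_\gamma\Z^m] = |\det A_\gamma|/\gcd(A)$. This yields the master inequality
\[
\prod_{i=1}^r (v_i + 1) \;\le\; |H| \;\le\; \frac{|\det A_\gamma|}{\gcd(A)} \;\le\; \frac{\Sigma(A)}{\gcd(A)}.
\]

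From this single inequality I would extract both halves. For $r = 1$, $v_1 + 1 \le |H|$ controls the $\bar\gamma$ coordinate, while Cramer's rule together with $\Sigma(A) \ge |\det A_\gamma|$ gives
\[
\|A_\gamma^{-1}A_\tau\ve v\|_\infty \;\le\; \frac{v_1 \Sigma(A)}{|\det A_\gamma|} \;\le\; \frac{\Sigma(A)}{\gcd(A)} - \frac{\Sigma(A)}{|\det A_\gamma|} \;\le\; \frac{\Sigma(A)}{\gcd(A)} - 1.
\]
For $r \ge 2$ the master inequality supplies $|H| \ge 2^r$ (since $v_i \ge 1$), and a one-line maximization of $\sum v_i$ subject to $\prod(v_i+1) \le |H|$ gives
\[
\|\ve v\|_1 \;\le\; \frac{|H|}{2^{r-1}} + r - 2 \;\le\; \frac{r|H|}{2^r}.
\]
The rearrangement $|\det A_\gamma| \ge |H|\gcd(A)$ then turns Cramer's rule into
\[
\|A_\gamma^{-1}A_\tau\ve v\|_\infty \;\le\; \frac{\Sigma(A)}{|\det A_\gamma|}\,\|\ve v\|_1 \;\le\; \frac{\Sigma(A)}{|H|\gcd(A)}\cdot\frac{r|H|}{2^r} \;=\; \frac{r\Sigma(A)}{2^r\gcd(A)},
\]
while $\|\ve v\|_\infty \le |H|/2^{r-1} - 1 \le r|H|/2^r$ for $r \ge 2$ handles the $\bar\gamma$ side.

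The main conceptual obstacle is that the vertex characterization constrains only the $\bar\gamma$-part $\ve v$, leaving the $\gamma$-part $A_\gamma^{-1}A_\tau \ve v$ seemingly uncontrolled; a naive Cramer bound is off by precisely the factor $\Sigma(A)/|\det A_\gamma|$. What rescues the argument is that the same pigeonhole which bounds $\prod(v_i+1)$ also forces $|\det A_\gamma| \ge |H|\gcd(A) \ge 2^r\gcd(A)$, and this is exactly the factor needed to absorb the overhead into the final $r/(2^r\gcd(A))$ constant.
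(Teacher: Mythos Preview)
Your proof is correct and follows essentially the same route as the paper: the paper's Theorem~\ref{nondegenerate} is exactly your pigeonhole step (phrased via the projected lattice $\Lambda(A)$ and irreducibility of sail vertices rather than the quotient group $\Z^m/A_\gamma\Z^m$, but these are dual descriptions of the same count), and Lemma~\ref{proximity_bound_by_product} is your Cramer-plus-sum-versus-product argument, with Lemma~\ref{t:sum_product} playing the role of your constrained maximization. Your detour through the subgroup order $|H|$ is a harmless refinement that cancels in the end, and your ``if and only if'' in the vertex characterization overstates what is needed (only ``vertex $\Rightarrow$ irreducible'' is required and proved), but neither affects the argument.
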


The bounds \eqref{eq:r_equals_1} and \eqref{eq:r_greater_1} are optimal. Specifically, \eqref{eq:r_equals_1} is attained already in the knapsack scenario (with the choice of parameters  
\eqref{optimal_example_r_equals_1}). The bound \eqref{eq:r_greater_1}, in its turn, is attained for 
\bea
A= \left(\begin{array}{cccc}
2 & 0 & 5 & 5\\
0 & 4 & 2 &-1
\end{array}
\right)\,,\;
{\ve b}=\left(\begin{array}{c}
20\\
3
\end{array}
\right)\mbox{ and }
\eea
the vertex ${\ve x}^*=(10, 3/4, 0, 0)^{\t}$ of $\KP(A, {\ve b})$. For this choice of parameters the corner polyhedron $\CP_{\gamma}(A, {\ve b})$
has the unique vertex  ${\ve z}^{*}= (0,1,1,3)^{\t}$.

Theorem \ref{t:IPTP_general} shows that for the corner polyhedron associated with a vertex ${\ve x}^*$ of $\KP(A, {\ve b})$ a strong {\em proximity-sparsity transference} holds: the distance from ${\ve x}^*$ to any vertex ${\ve z}^*$ of the corner polyhedron {\em exponentially} drops with the size of support of ${\ve z}^*$ and, vice versa, the size of support of ${\ve z}^*$ reduces with the growth of its distance to ${\ve x}^*$ .

Suppose next that the polyhedron $\KP(A, {\ve b})$ is integer feasible and consider its {\em integer hull} $\KP_I= \conv(\KP(A, {\ve b})\cap \Z^n)$.
A natural direction for a further research would be to derive a distance-sparsity transference bound for the vertices of $\KP_I$. Notice that the set $\KP(A, {\ve b})\cap \Z^n$ is obtained from $\CP_{\gamma}(A, {\ve b})\cap\Z^n$ by enforcing back the nonnegativity constraints  $ {\ve x}_{\gamma}\ge 0$ and this may potentially result in cutting off all vertices of the corner polyhedron. In Section \ref{knapsackintro} we show that in the knapsack scenario at least one vertex of $\CP_{\gamma}(A, {\ve b})$ avoids the cut and Theorem \ref{t:IPTP_general} implies an optimal  distance-sparsity transference bound for lattice points in the knapsack polytope. 

Although it remains an open problem to extend Theorem \ref{t:IPTP_general} to vertices of  $\KP_I$ in the general setting, the next result of this paper allows enforcing back the constraints $ {\ve x}_{\gamma}\ge 0$ that are tight at ${\ve x}^*$ when $\tau:=\supp({\ve x}^*)$ has size strictly less than $m$. In this situation, following \cite{thomas2005structure}, we may define the polyhedron
\bea
\CP_{\tau}(A,\bb)=\conv(\{\xx\in\Z^{n}:A\xx=\bb,\xx_{\bar{\tau}}\geq\zero\}).
\eea
In this setting the choice of basis $\gamma$ in \eqref{matrix_partition} is typically not unique. However, we show is that there exists at least one basis $\gamma$ for which the conclusions of Theorem~\ref{t:IPTP_general} remain valid for this polyhedron, up to a factor which depends on the number of zero coordinates of $\xx_{\tau}^*$.

\begin{thm}
\label{thm:upper_bound_extension}
Suppose that $A$ satisfies \eqref{matrix_partition} and ${\ve b}\in \Z^m$. Let  $\ve x^*$ be given by \eqref{vertex_assumption} and let $\tau=\supp(\xx^{*})$. Let $\zz^{*}$ be a vertex of the polyhedron $\CP_{\tau}(A,\bb)$. Then the basis $\gamma$ in \eqref{matrix_partition} may be chosen so that, letting $r=\|{\ve z}^{*}_{\bar\gamma}\|_0$ and $d=m-\left|\tau\right|$, we have
\begin{equation}\label{eq:r_equals_0_extension}
{\ve x}^{*}={\ve z}^{*} \mbox{ if } r= 0\,,
\end{equation}
\begin{equation}\label{eq:r_equals_1_extension}
\|{\ve x}^{*}-{\ve z}^{*}\|_{\infty} \le \frac{\Sigma(A)}{\gcd(A)}-1 \mbox{ if } r= 1 \mbox{ and }
\end{equation}
\begin{equation}\label{eq:r_greater_1_extension}
\|{\ve x}^{*}-{\ve z}^{*}\|_{\infty} \,\frac{2^{r}}{r^{d+1}} \leq \frac{\Sigma(A)}{\gcd(A)} \mbox{ if } r\ge 2\,.
\end{equation}
\end{thm}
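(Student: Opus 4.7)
The strategy is to choose the basis $\gamma$ carefully and reduce Theorem~\ref{thm:upper_bound_extension} to Theorem~\ref{t:IPTP_general} via a Carath\'eodory-type decomposition of $\zz^*$ inside the corner polyhedron $\CP_{\gamma}(A,\bb)$.

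First, I would select the basis $\gamma$ so as to absorb as much of $\supp(\zz^*)$ as possible. Let $\rho := \supp(\zz^*) \cap \bar{\tau}$. Since $A_\tau$ has full column rank $m - d$, a matroid argument on the column dependencies of $A$ lets us pick a maximal subset $\rho' \subseteq \rho$ with $A_{\tau \cup \rho'}$ of full column rank, and then extend $\tau \cup \rho'$ to a basis $\gamma \supseteq \tau$ of $A$. Under this choice $|\rho'| \leq d$ and $r = \|\zz^*_{\bar\gamma}\|_0 = |\rho| - |\rho'|$.

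The cases $r = 0$ and $r = 1$ are then handled directly. For $r = 0$: $\zz^*_{\bar\gamma} = \zero$ together with $A\zz^* = \bb$ yields $\zz^*_\gamma = A_\gamma^{-1}\bb = \xx^*_\gamma$, whence $\zz^* = \xx^*$ and \eqref{eq:r_equals_0_extension} follows. For $r = 1$: restricting attention to the single nonzero coordinate of $\zz^*_{\bar\gamma}$ reduces the estimate to the corresponding argument in Theorem~\ref{t:IPTP_general}, since the congruence condition $A_\gamma^{-1}(A_{\bar\gamma}\zz^*_{\bar\gamma}) \in \Z^m$ is unaffected by the extra nonnegativity constraints, yielding \eqref{eq:r_equals_1_extension}.

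For the main case $r \geq 2$, observe that $\CP_\tau \subseteq \CP_\gamma$ so $\zz^* \in \CP_\gamma$, though in general not as a vertex. The $d$ additional inequalities $x_i \geq 0$ for $i \in \gamma \setminus \tau$, which distinguish $\CP_\tau$ from $\CP_\gamma$, should confine $\zz^*$ to a face of $\CP_\gamma$ of dimension at most $d$: at the vertex $\zz^*$ of $\CP_\tau$ one has $n - m$ linearly independent tight facet-inequalities beyond the equations $A\xx = \bb$, and at most $d$ of these come from the additional constraints, leaving at least $n - m - d$ tight facets of $\CP_\gamma$. Hence by Carath\'eodory, $\zz^* = \sum_{i=0}^{d} \lambda_i \vv_i$ for some vertices $\vv_i$ of $\CP_\gamma$ with $\lambda_i \geq 0$ and $\sum_i \lambda_i = 1$. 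For each such $\vv_i$, Theorem~\ref{t:IPTP_general} gives $\|\xx^* - \vv_i\|_\infty \cdot 2^{r_i}/r_i \leq \Sigma(A)/\gcd(A)$ for $r_i := \|(\vv_i)_{\bar\gamma}\|_0 \geq 2$ (with analogous bounds for $r_i \in \{0,1\}$); and since $(\vv_i)_{\bar\gamma} \geq \zero$ while $\sum_i \lambda_i (\vv_i)_{\bar\gamma} = \zz^*_{\bar\gamma}$ has support of size $r$, necessarily $r_i \leq r$.

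The main obstacle lies in aggregating these individual estimates to produce precisely the factor $r^{d+1}$ appearing in \eqref{eq:r_greater_1_extension}. A triangle inequality combined with the combinatorial slack $d + 1 \leq r^d$ (valid for $r \geq 2$ and $d \geq 0$) handles the case in which each $r_i$ is close to $r$; however, since $x \mapsto x/2^x$ is decreasing for $x \geq 2$, vertices with $r_i \ll r$ give weaker individual bounds from Theorem~\ref{t:IPTP_general}, and a more refined analysis is required. A natural refinement would be either to ensure that the Carath\'eodory decomposition can be chosen with all $r_i = r$ (leveraging the structure of the face), or to proceed by induction on $d$, each inductive step introducing one further zero-coordinate into $\supp(\xx^*)$ and absorbing one additional factor of $r$ in the final estimate; the base case $d = 0$ coincides with Theorem~\ref{t:IPTP_general}.
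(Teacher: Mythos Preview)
Your approach is genuinely different from the paper's and, as it stands, has two real gaps.

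First, the face-dimension claim. You assert that $\zz^{*}$ lies on a face of $\CP_{\gamma}(A,\bb)$ of dimension at most $d$, arguing that the $d$ inequalities $x_{i}\geq 0$, $i\in\gamma\setminus\tau$, are ``the additional constraints distinguishing $\CP_{\tau}$ from $\CP_{\gamma}$.'' But $\CP_{\tau}$ and $\CP_{\gamma}$ are integer hulls, not LP relaxations: their facet descriptions have little to do with these $d$ coordinate inequalities, and in general $\CP_{\tau}\neq\CP_{\gamma}\cap\{x_{i}\geq 0:i\in\gamma\setminus\tau\}$. The $n-m$ tight facets of $\CP_{\tau}$ at $\zz^{*}$ need not be valid for $\CP_{\gamma}$, let alone tight there, so you cannot conclude that $\zz^{*}$ sits on a low-dimensional face of $\CP_{\gamma}$. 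Without this you have no bound on the number of terms in the Carath\'eodory decomposition.

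Second, and more seriously, you explicitly leave the aggregation step open. Even granting a decomposition $\zz^{*}=\sum_{i=0}^{d}\lambda_{i}\vv_{i}$ into $d+1$ vertices of $\CP_{\gamma}$, Theorem~\ref{t:IPTP_general} gives $\|\xx^{*}-\vv_{i}\|_{\infty}\leq (r_{i}/2^{r_{i}})\cdot\Sigma(A)/\gcd(A)$, and convexity of the norm yields only $\|\xx^{*}-\zz^{*}\|_{\infty}\leq\max_{i}(r_{i}/2^{r_{i}})\cdot\Sigma(A)/\gcd(A)$. Since $t\mapsto t/2^{t}$ is decreasing on $[2,\infty)$, the worst case is some $r_{i}=2$, which gives a bound of order $\Sigma(A)/\gcd(A)$ with no $2^{-r}$ gain whatsoever. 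Your two suggested repairs (forcing all $r_{i}=r$, or induction on $d$) are both plausible directions, but neither is carried out, and it is not at all clear that either can be made to work.

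For contrast, the paper takes an entirely different route. It chooses $\gamma\supseteq\tau$ to maximise $|\det A_{\gamma}|\cdot\prod_{i\in\gamma}(z_{i}^{*}+1)$, and then proves directly (Theorem~\ref{thm:upper_bound_product_coords_extension}) that
\[
\prod_{j\in\bar{\gamma}}(z_{j}^{*}+1)\leq r^{d}\,\frac{|\det A_{\gamma}|}{\gcd(A)}.
\]
The argument is volumetric: one builds a centrally symmetric polytope $P\subset\R^{n-m}$, namely the box $\{-\tfrac{1}{2}\oneone<\xx<\zz_{\bar\gamma}^{*}+\tfrac{1}{2}\oneone\}$ intersected with $d$ slabs coming from the constraints $x_{i}\geq 0$ for $i\in\gamma\setminus\tau$. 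The choice of $\gamma$ guarantees, via Brunn's concavity principle, that each slab cuts off at most a $(1-1/r)$-fraction of the volume, so $\vol(P)\geq r^{-d}\prod_{j\in\bar\gamma}(z_{j}^{*}+1)$; on the other hand Blichfeldt's lemma and irreducibility of $\zz_{\bar\gamma}^{*}$ (with respect to the projected lattice and an appropriate cone) force $\vol(P)\leq|\det A_{\gamma}|/\gcd(A)$. The proximity bound then follows from Lemma~\ref{proximity_bound_by_product} exactly as in Theorem~\ref{t:IPTP_general}. No Carath\'eodory decomposition enters.
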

We remark that integer programs of the form $A\xx=\bb,\xx_{\bar{\tau}}\geq\zero, \xx\in\Z^n$, where $\tau$ is the support of a vertex of $\KP(A, {\ve b})$, have been investigated in \cite{thomas2005structure}. Such an integer program is called a \emph{Gomory relaxation with respect to $\tau$}. See in particular \cite[\S 2]{thomas2005structure} for more details.

\subsection{Distance-sparsity transference for knapsacks}\label{knapsackintro}

We will now separately consider the case $A\in \Z_{>0}^{1\times n}$, known as {\em knapsack scenario}. We will follow a traditional vector notation and replace $A$ and ${\ve b}$ with a positive integer vector ${\ve a}=(a_1, \ldots, a_n)^{\t} \in \Z^{n}_{>0}$ and integer $b\in\Z$.  
In this setting $\KP(A, {\ve b})$ is referred to as the  {\em knapsack polytope}
\bea
\KP({\ve a},b)=\{{\ve x}\in \R^n_{\ge 0}: {\ve a}^{\t}{\ve x}=b\}\,.
\eea
In what follows, we will exclude the trivial case $n=1$ and assume that $n\ge 2$. We also assume that the polytope $\KP({\ve a}, b)$ contains integer points. Equivalently, ${b}$ belongs to the \emph{semigroup}
\begin{equation*}
\icone({\ve a}) = \{{\ve a}^{\t}{\ve z}: {\ve z} \in \ZZ^n_{\ge 0} \}
\end{equation*}
generated by the entries of the vector ${\ve a}$. Note that any element of the semigroup $\icone({\ve a}) $ must be divisible by the greatest common divisor $\gcd(a_1, \ldots, a_n)$ of $a_1, \ldots, a_n$. Hence, we may assume without loss of generality that ${\ve a}$  satisfies the following conditions:
\be \label{positive_primitive}
\begin{array}{l}
{\ve a}=(a_1, \ldots, a_n)^{\t} \in \Z^{ n}_{>0}\,,n\ge 2\,, \text{ and }
\gcd(a_1, \ldots, a_n)=1\,.
\end{array}
\ee

Aliev et al \cite[Theorem 1] {AHO} proved that for any vertex ${\ve x}^{*}$ of the polytope $\KP({\ve a},b)$ there exists an integer point ${\ve z}\in  \KP({\ve a},b)$ such that 
\be\label{AHO_bound}
\|{\ve x}^{*}-{\ve z}\|_{\infty}\le \|{\ve a}\|_\infty -1
\ee
and that the bound (\ref{AHO_bound}) is sharp in the following sense.
For any positive integer $k$ and any dimension $n$ there exist ${\ve a}$ satisfying (\ref{positive_primitive}) with $\|{\ve a}\|_{\infty}=k$  and $b\in \Z$ such that
the knapsack polytope $\KP({\ve a},b)$ contains exactly one integer point ${\ve z}$ and $\|{\ve x}^{*}-{\ve z}\|_{\infty}=\|{\ve a}\|_\infty-1$.

The best known sparsity-type estimate in the knapsack scenario \eqref{positive_primitive}, obtained in  \cite[Theorem 6]{AADO}, guarantees existence of an integer point  ${\ve z}\in \KP({\ve a},b)$ that satisfies the bound
 \be\label{sparse_knapsack}
 \|{\ve z}\|_0\le 1 + \log(\min\{a_1, \ldots, a_n\})\,.
 \ee
The next result will combine and refine the bounds (\ref{AHO_bound}) and (\ref{sparse_knapsack}) as follows. 
The knapsack polytope $\KP({\ve a}, b)$ is an $(n-1)$-dimensional simplex in $\R^n$ with vertices $({b}/{a_1})\ve{e}_1, \ldots, ({b}/{a_n})\ve{e}_n$,
where $\ve{e}_i$ denotes the $i$-th standard basis vector. 
Hence, without loss of generality, we may assume that the vertex ${\ve x}^{*}$ has the form 
\be\label{knapsack_vertex} {\ve x}^{*}=\frac{b}{a_1}{\ve e}_1\,.
\ee
The corner polyhedron associated with the vertex ${\ve x}^{*} $ (with $\gamma=\{1\}$) can be written as
$\CP_{\gamma}({\ve a}, {b})=\conv (\{{\ve x}\in \Z^n: {\ve a}^{\t}{\ve x}={ b},  x_2\ge 0, \ldots, x_n\ge 0\})$.
%
%
%
\begin{thm}\label{t:IPTP_positive_knapsack}
Let ${\ve a}$ satisfy (\ref{positive_primitive}), $b\in\icone({\ve a})$ and ${\ve x}^{*}$ is given by \eqref{knapsack_vertex}. Then $ \KP({\ve a}, {b})$ contains a vertex ${\ve z}^{*}$ of $\CP_{\gamma}({\ve a}, {b}) $ with $r=\|(z^*_2,\ldots, z^*_n)^{\t}\|_0$  such that 
\begin{equation}\label{eq:r_equals_0_knapsack}
{\ve x}^{*}={\ve z}^{*} \mbox{ if } r= 0\,,
\end{equation}
\begin{equation}\label{eq:r_equals_1_knapsack}
\|{\ve x}^{*}-{\ve z}^{*}\|_{\infty} \le \|{\ve a}\|_{\infty}-1 \mbox{ if } r= 1 \mbox{ and }
\end{equation}
\begin{equation}\label{eq:r_greater_1_knapsack}
\|{\ve x}^{*}-{\ve z}^{*}\|_{\infty} \,\frac{2^{r}}{r} < \|{\ve a}\|_{\infty}  \mbox{ if } r\ge 2\,.
\end{equation}
\end{thm}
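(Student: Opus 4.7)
The plan is to derive Theorem~\ref{t:IPTP_positive_knapsack} from Theorem~\ref{t:IPTP_general} specialised to $m=1$ and $A=\ve a^\t$. In that setting $\Sigma(A)=\|\ve a\|_\infty$ and $\gcd(A)=\gcd(a_1,\ldots,a_n)=1$ by \eqref{positive_primitive}, so the quotient $\Sigma(A)/\gcd(A)$ equals $\|\ve a\|_\infty$, and the three cases of Theorem~\ref{t:IPTP_positive_knapsack} match those of Theorem~\ref{t:IPTP_general} up to two issues: (i) the vertex $\ve z^*$ must be arranged to lie in $\KP(\ve a,b)$, i.e.\ to satisfy $z_1^*\ge 0$; and (ii) the weak inequality \eqref{eq:r_greater_1} must be sharpened to the strict \eqref{eq:r_greater_1_knapsack}.

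For (i), I would examine the LP $\max\{x_1:\ve x\in\CP_\gamma(\ve a,b)\}$. On the corner polyhedron, $a_1x_1=b-\sum_{i\ge 2}a_ix_i\le b$ (using $a_i>0$ and $x_i\ge 0$ for $i\ge 2$), so the objective is bounded above. Since $b\in\icone(\ve a)$, there is some $\ve y\in\Z^n_{\ge 0}$ with $\ve a^\t\ve y=b$, and $\ve y\in\CP_\gamma(\ve a,b)$ with $y_1\ge 0$. The LP therefore has a nonnegative optimum attained at a vertex $\ve z^*$ of $\CP_\gamma(\ve a,b)$ with $z_1^*\ge y_1\ge 0$, so $\ve z^*\in\KP(\ve a,b)$. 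Applying Theorem~\ref{t:IPTP_general} to this $\ve z^*$ immediately yields \eqref{eq:r_equals_0_knapsack}, \eqref{eq:r_equals_1_knapsack}, and the weak form of \eqref{eq:r_greater_1_knapsack}.

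For (ii), I would argue by contradiction. Setting $\sigma=\supp(\ve z^*_{\bar\gamma})=\{i_1,\ldots,i_r\}$ with each $z_{i_j}^*\ge 1$, the identity $a_1(b/a_1-z_1^*)=\sum_{j=1}^r a_{i_j}z_{i_j}^*$ has both sides nonnegative. Equality in \eqref{eq:r_greater_1} would force simultaneous tightness of every intermediate estimate in the proof of Theorem~\ref{t:IPTP_general}, in particular pinning down the values $z_{i_j}^*$ and requiring each $a_{i_j}$ to attain $\pm\|\ve a\|_\infty$ with a specific sign pattern. The matrix-case equality example displayed after Theorem~\ref{t:IPTP_general} achieves this through a negative entry in $A$; the task would be to show that, once $\ve a$ is constrained to be strictly positive by \eqref{positive_primitive} and $z_1^*\ge 0$ is available, no such sign configuration can be realised, so equality cannot hold.

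The main obstacle is precisely this strictness step, which requires a careful re-examination of the equality case in the proof of Theorem~\ref{t:IPTP_general} under the two extra hypotheses just established. Step (i) and the direct invocation of Theorem~\ref{t:IPTP_general} are otherwise routine.
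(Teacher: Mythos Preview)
Your step (i) is exactly what the paper does: maximise $x_1$ over $\CP_\gamma(\ve a,b)$, use any $\ve y\in\KP(\ve a,b)\cap\Z^n$ as a witness that the optimum is $\ge 0$, and hence the optimal vertex $\ve z^*$ lies in $\KP(\ve a,b)$. Applying Theorem~\ref{t:IPTP_general} with $\Sigma(A)=\|\ve a\|_\infty$, $\gcd(A)=1$ then gives \eqref{eq:r_equals_0_knapsack}, \eqref{eq:r_equals_1_knapsack}, and the weak version of \eqref{eq:r_greater_1_knapsack}.

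Your step (ii), however, misidentifies the obstruction. You suggest that equality would force the $a_{i_j}$ to attain $\pm\|\ve a\|_\infty$ with a sign pattern that positivity of $\ve a$ rules out. But in the knapsack case the Cramer numerators are simply the $a_i$ themselves, all positive, so equality in the estimate $\delta\le(\sum_{i\ge 2}z_i^*)\|\ve a\|_\infty/a_1$ just forces $a_i=\|\ve a\|_\infty$ for every $i$ in the support of $\ve z^*_{\bar\gamma}$. Nothing about signs prevents this; such vectors $\ve a$ exist. The actual contradiction, which you do not mention, is \emph{geometric and uses that $\ve z^*$ is a vertex of the corner polyhedron}. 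When those $a_i$ all coincide, the affine lattice $\Lambda(\ve a,b)$ (described by the congruence $\sum_{i\ge 2}a_i\lambda_i\equiv b\pmod{a_1}$) contains each point $\bigl(\sum_{i\ge 2}z_i^*\bigr)\ve e_j$ for $j$ in the support. The point $\ve y^*=(z_2^*,\ldots,z_n^*)$ lies in the simplex spanned by these lattice points and, since $r\ge 2$, is not one of its vertices; hence $\ve y^*$ is not a vertex of the sail $\sail(\Lambda(\ve a,b))$, contradicting the fact that $\ve z^*$ was a vertex of $\CP_\gamma(\ve a,b)$.

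So the missing ingredient is not a sign argument but the observation that the equality configuration destroys the vertex property of $\ve z^*$. Note also that the feasibility $z_1^*\ge 0$ from step (i) plays no role in this strictness argument; only the vertex property does.
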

%
%
%
%
Theorem \ref{t:IPTP_positive_knapsack} can be viewed as a transference result that allows strengthening the distance bound (\ref{AHO_bound})  if integer points in the knapsack polytope  are not sparse and, vice versa, strengthening the sparsity bound  (\ref{sparse_knapsack}) if feasible integer points are sufficiently far from a vertex of the knapsack polytope.

 Given a cost vector ${\ve c}\in \Z^n$, we will now consider the integer knapsack problem
 \be
\min\{ {\ve c}^{\t}{\ve x}: {\ve x}\in \KP({\ve a},b)\cap \Z^n\}\,.
\label{initial_IP}
\ee
Note that  \eqref{initial_IP} is feasible since $b\in\sg({\ve a})$. 

Let $IP({\ve c}, {\ve a}, {b})$ and $LP({\ve c}, {\ve a}, { b})$ denote the optimal values of (\ref{initial_IP}) and its linear programming relaxation
\be\label{initial_LP}
\min\{ {\ve c}^{\t} {\ve x}: {\ve x}\in \KP({\ve a},b)\}\,,
\ee
 respectively.
The {\em integrality gap } $IG({\ve c}, {\ve a}, {b})$ of  (\ref{initial_IP}) is defined as
\bea IG({\ve c}, {\ve a}, {b})= IP({\ve c}, {\ve a}, { b})-LP({\ve c}, {\ve a}, { b})\,.\eea
As a corollary of Theorem \ref{t:IPTP_positive_knapsack}, we obtain the following bound for the integrality gap.
\begin{cor}\label{coro_upper_bound} Let ${\ve a}$ satisfy (\ref{positive_primitive}), $b\in\icone({\ve a})$ and ${\ve c}\in\Z^n$.
Suppose that ${\ve x}^{*}$ given by \eqref{knapsack_vertex} is an optimal vertex solution to  \eqref{initial_LP}. 
Let further ${\ve z}^{*}$ be any vertex of $\CP_{\gamma}({\ve a}, {b}) $ such that ${\ve z}^{*}\in \KP({\ve a}, b)$. Then for $r=\|(z^*_2,\ldots, z^*_n)^{\t}\|_0$  we have
\begin{equation}\label{eq:r_equals_0_gap}
IG({\ve c}, {\ve a}, {b})=0 \mbox{ if } r= 0\,,
\end{equation}
\begin{equation}\label{eq:r_equals_1_gap}
IG({\ve c}, {\ve a}, {b}) \le 2(\|{\ve a}\|_{\infty}-1) \|{\ve c}\|_{\infty}  \mbox{ if } r=1 \mbox{ and }
\end{equation}
\begin{equation}\label{eq:r_greater_1_gap}
IG({\ve c}, {\ve a}, {b})  < \frac{r(r+1)}{2^{r}}\|{\ve a}\|_{\infty} \|{\ve c}\|_{\infty} \mbox{ if } r\ge 2\,.
\end{equation}
\end{cor}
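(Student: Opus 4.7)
The plan is to deduce the corollary directly from Theorem~\ref{t:IPTP_positive_knapsack} by combining the LP/IP gap with a support-times-$\ell_\infty$ estimate on ${\ve c}^{\t}({\ve z}^{*}-{\ve x}^{*})$.

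First I would observe that $\ve z^{*}\in\KP({\ve a},b)\cap\Z^n$ is a feasible integer solution to \eqref{initial_IP}, so $IP({\ve c},{\ve a},b)\le{\ve c}^{\t}{\ve z}^{*}$, while optimality of $\ve x^{*}$ for \eqref{initial_LP} gives $LP({\ve c},{\ve a},b)={\ve c}^{\t}{\ve x}^{*}$. Consequently
\begin{equation*}
0\le IG({\ve c},{\ve a},b)\le {\ve c}^{\t}({\ve z}^{*}-{\ve x}^{*}).
\end{equation*}
Next, since only the entries of ${\ve z}^{*}-{\ve x}^{*}$ indexed by $\supp({\ve z}^{*}-{\ve x}^{*})$ contribute, I would use the elementary estimate
\begin{equation*}
{\ve c}^{\t}({\ve z}^{*}-{\ve x}^{*})\le \|{\ve c}\|_{\infty}\,\|{\ve z}^{*}-{\ve x}^{*}\|_{0}\,\|{\ve z}^{*}-{\ve x}^{*}\|_{\infty}.
\end{equation*}

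The second step is to bound $\|{\ve z}^{*}-{\ve x}^{*}\|_{0}$. Because ${\ve x}^{*}=(b/a_1){\ve e}_1$ is supported only at coordinate $1$, every nonzero coordinate of ${\ve z}^{*}-{\ve x}^{*}$ lies either at index $1$ or at an index $i\in\{2,\dots,n\}$ with $z^{*}_i\ne 0$; hence $\|{\ve z}^{*}-{\ve x}^{*}\|_{0}\le r+1$.

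Finally I would split on $r$ and plug in the proximity bounds of Theorem~\ref{t:IPTP_positive_knapsack}. The case $r=0$ gives ${\ve x}^{*}={\ve z}^{*}$ by \eqref{eq:r_equals_0_knapsack}, whence $IG=0$. For $r=1$, combining $\|{\ve z}^{*}-{\ve x}^{*}\|_{0}\le 2$ with $\|{\ve z}^{*}-{\ve x}^{*}\|_{\infty}\le\|{\ve a}\|_{\infty}-1$ from \eqref{eq:r_equals_1_knapsack} yields \eqref{eq:r_equals_1_gap}. For $r\ge 2$, combining $\|{\ve z}^{*}-{\ve x}^{*}\|_{0}\le r+1$ with the strict inequality $\|{\ve z}^{*}-{\ve x}^{*}\|_{\infty}<(r/2^{r})\|{\ve a}\|_{\infty}$ from \eqref{eq:r_greater_1_knapsack} produces $IG<(r+1)\cdot(r/2^{r})\|{\ve a}\|_{\infty}\|{\ve c}\|_{\infty}=\frac{r(r+1)}{2^{r}}\|{\ve a}\|_{\infty}\|{\ve c}\|_{\infty}$, matching \eqref{eq:r_greater_1_gap}.

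There is essentially no main obstacle, since Theorem~\ref{t:IPTP_positive_knapsack} does all the hard work; the only subtlety to verify is the sharpness of the support count (giving the factor $r+1$ rather than $r$), which is why the $r=1$ case picks up the factor $2$ and the $r\ge 2$ case produces $r(r+1)$ in the numerator.
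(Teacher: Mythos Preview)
Your proof is correct and follows essentially the same route as the paper: bound $IG({\ve c},{\ve a},b)\le {\ve c}^{\t}({\ve z}^{*}-{\ve x}^{*})\le (r+1)\|{\ve c}\|_{\infty}\|{\ve x}^{*}-{\ve z}^{*}\|_{\infty}$ via the support count, then invoke the three proximity bounds for ${\ve z}^{*}$. The only minor remark is that Theorem~\ref{t:IPTP_positive_knapsack} is stated existentially, so strictly speaking the proximity estimates you use for the \emph{given} vertex ${\ve z}^{*}$ are justified by Theorem~\ref{t:IPTP_general} (for \eqref{eq:r_equals_0_knapsack} and \eqref{eq:r_equals_1_knapsack}) together with the strictness argument in the proof of Theorem~\ref{t:IPTP_positive_knapsack}, which applies to every vertex of $\CP_{\gamma}({\ve a},b)$.
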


It follows from the proof of Theorem 1(ii) in \cite{AHO} that the bound \eqref{eq:r_equals_1_knapsack} (and hence \eqref{eq:r_equals_1} and \eqref{eq:r_equals_1_extension} for $m=1$) corresponding to the case $r=1$ is optimal.  For completeness, we recall that it is sufficient to choose a positive integer $k$ and set 
\be\label{optimal_example_r_equals_1}
{\ve a}=(k,\ldots,k,1)^{\t}\,, b=k-1 \mbox{ and } {\ve x}^*=\frac{k-1}{k}\cdot\ve{e}_1\,. 
\ee
Then the knapsack polytope $\KP({\ve a},b)$ contains precisely one integer point,  ${\ve z}^*=(k-1)\cdot\ve{e}_n$ and we obtain
$\|{\ve x}^*-{\ve z}^*\|_{\infty}=k-1 =\|{\ve a}\|_{\infty}-1$.
The next result of this paper shows that the bounds in Theorems \ref{t:IPTP_general} - \ref{t:IPTP_positive_knapsack} are optimal in the knapsack scenario for $r\ge 2$.
%
%
%
%
\begin{thm}\label{t:IPTP_positive_knapsack_sharpness}
Fix integer $s\ge 3$. For any $\epsilon>0$ there exists an integer vector ${\ve a}\in \Z^s$ satisfying (\ref{positive_primitive}) and $b\in\icone({\ve a})$ such that for ${\ve x}^{*}=({b}/{a_1}){\ve e}_1$ the knapsack polytope $ \KP({\ve a}, {b})$ contains a vertex ${\ve z}^*$ of $\CP_{\gamma}({\ve a}, {b}) $ with $\|{\ve z}^*_{\bar\gamma}\|_0=s-1$ and
\begin{equation}\label{eq:general_Knapsack_Bound_via_determinant_sharpness}
\|{\ve x}^*-{\ve z}^*\|_{\infty} \,\frac{2^{s-1}}{s-1} >(1-\epsilon)\|{\ve a}\|_{\infty}\,.
\end{equation}
\end{thm}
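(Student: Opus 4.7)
The plan is to exhibit, for each $\epsilon>0$, a one-parameter family of knapsack instances indexed by a large odd integer $M$, in which $r=s-1$ and the ratio $\|\ve{x}^*-\ve{z}^*\|_\infty\cdot 2^{s-1}/((s-1)\|\ve{a}\|_\infty)$ tends to $1$ as $M\to\infty$. Concretely, for $M$ an odd integer to be chosen sufficiently large, I set
\[
\ve{a}=(M,\underbrace{1,\ldots,1}_{s-2},A)^{\t},\quad A=\frac{M+1}{2},\quad K=\left\lfloor\frac{M(s-1)}{2^{s-1}}\right\rfloor,\quad b=(s-2)+AK,
\]
and propose the candidate vertex $\ve{z}^*=(0,1,\ldots,1,K)^{\t}$, so that $\|\ve{z}^*_{\bar\gamma}\|_0=s-1$. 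Since $M$ is odd, $\gcd(\ve{a})=\gcd(M,1)=1$ and $\|\ve{a}\|_\infty=M$, while $2A\equiv 1\pmod M$ gives $A^{-1}\equiv 2\pmod M$. The conditions \eqref{positive_primitive}, $b\in\icone(\ve{a})$ (automatic from $a_2=1$), $\ve{a}^{\t}\ve{z}^*=b$, and $\ve{z}^*\in\KP(\ve{a},b)$ are then immediate.

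The main technical step is verifying that $\ve{z}^*$ is a vertex of $\CP_{\gamma}(\ve{a},b)$. Using the standard characterization that vertices of $\CP_\gamma$ correspond bijectively to componentwise-minimal elements of the set $\{(z_2,\ldots,z_s)\in\N^{s-1}:\sum_{i=2}^{s-1}z_i+A z_s\equiv b\pmod M\}$, I need to rule out potential reductions $(y_2,\ldots,y_s)\in \{0,1\}^{s-2}\times([0,K]\cap\Z)$ with $(y_2,\ldots,y_s)\neq(1,\ldots,1,K)$ satisfying the same congruence. Writing $\Delta_i=z^*_i-y_i$, this amounts to showing that no nonzero $(\Delta_2,\ldots,\Delta_s)\in\{0,1\}^{s-2}\times([0,K]\cap\Z)$ satisfies $\sum_{i=2}^{s-1}\Delta_i+A\Delta_s\equiv 0\pmod M$. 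Parameterizing by $t=\sum_{i=2}^{s-1}\Delta_i\in\{0,1,\ldots,s-2\}$, the congruence simplifies to $\Delta_s\equiv-2t\pmod M$. For $t=0$ the only admissible value is $\Delta_s=0$, giving the trivial reduction; for $1\le t\le s-2$, the minimal non-negative representative is $\Delta_s=M-2t$, which is excluded as long as $K<M-2(s-2)$. Hence for all sufficiently large $M$, $\ve{z}^*$ is a vertex.

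The distance then follows: $\ve{x}^*-\ve{z}^*=(b/M,-1,\ldots,-1,-K)$, and since $b/M=K/2+((s-2)+K/2)/M$ is less than $K$ once $M\ge 2s-2$, we have $\|\ve{x}^*-\ve{z}^*\|_\infty=K$. Combining with $K\ge M(s-1)/2^{s-1}-1$ yields
\[
\|\ve{x}^*-\ve{z}^*\|_\infty\cdot\frac{2^{s-1}}{s-1}\ge M-\frac{2^{s-1}}{s-1}>(1-\epsilon)M
\]
whenever $M>2^{s-1}/((s-1)\epsilon)$. Choosing $M$ to be an odd integer exceeding both this threshold and the one needed for $K<M-2(s-2)$ completes the proof.

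The main obstacle is the vertex verification; it is made tractable by the specific choice $A=(M+1)/2$, which yields the clean modular inverse $A^{-1}\equiv 2\pmod M$ and reduces the multi-dimensional minimality check to the single inequality $K<M-2(s-2)$, a condition far weaker than the near-tight bound $K\approx M(s-1)/2^{s-1}$ that already drives the saturation of the inequality.
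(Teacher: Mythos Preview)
Your vertex verification rests on a false premise. The claimed ``standard characterization'' --- that vertices of $\CP_\gamma({\ve a},b)$ correspond bijectively to componentwise-minimal non-negative solutions of the congruence $\sum_{i\ge 2}a_i z_i\equiv b\pmod{a_1}$ --- does not hold: componentwise minimality is necessary but not sufficient for being a vertex of the sail. In fact your candidate ${\ve z}^*=(0,1,\ldots,1,K)^\t$ is \emph{not} a vertex of $\CP_\gamma({\ve a},b)$ for any $s\ge 3$ once $M$ is large.

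For $s\ge 4$ the equal coefficients $a_2=\cdots=a_{s-1}=1$ create a symmetry: the integer points $(0,2,0,1,\ldots,1,K)^\t$ and $(0,0,2,1,\ldots,1,K)^\t$ both satisfy ${\ve a}^\t{\ve x}=b$ with ${\ve x}_{\bar\gamma}\ge{\ve 0}$, and their average is ${\ve z}^*$. For $s=3$ the identity $2A\equiv 1\pmod M$ yields the same conclusion via $(-1,0,K+2)^\t$ and $(1,2,K-2)^\t$ (valid once $K\ge 2$), which again lie in $\CP_\gamma({\ve a},b)$ and average to ${\ve z}^*$. In every case your ${\ve z}^*$ is a proper convex combination of two other integer points of the corner polyhedron, hence not a vertex --- even though it passes your minimality test, since neither witness lies coordinatewise below ${\ve z}^*_{\bar\gamma}$.

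The paper's construction avoids this obstacle by choosing coefficients that are pairwise distinct modulo $a_1$ (specifically $a_i\equiv 2^{s-i}\pmod{2^{s-1}}$), breaking the symmetry that kills your candidate, and by certifying the vertex property through a supporting-hyperplane argument rather than a minimality check: the candidate ${\ve 1}_{s-1}$ lies on a face of the sail that coincides with a lower-dimensional integer hull, where it is shown to be a vertex by induction on $s$.
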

%
%
%
%

\subsection{A refined sparsity-type bound for solutions to integer programs}

The next result of this paper aims to refine the general sparsity-type bound obtained in \cite[Theorem 1]{Support}. 
Let
\bea
\rho({\ve x})=\min\{|x_i| : i\in\supp({\ve x}) \}\,
\eea
denote the minimum absolute nonzero entry of ${\ve x}$. The notation $\log(\cdot)$ will be used for logarithm with base two.
Let $A \in \Z^{m\times n}$, $\ve b \in \Z^m $ and $\ve c \in \Z^n$. We will consider  the general integer linear problem in standard form
\begin{equation}
  \label{eq:mainProblem}
  \max\left\{ \ve c^{\t}\ve x : {\ve x}\in \KP(A, {\ve b})\cap  \Z^n\right\}\,.
\end{equation}
We assume that $ \KP(A, {\ve b})$ contains integer points, so that \eqref{eq:mainProblem} is feasible.
We will also assume without loss of generality that the matrix $A$ has full row rank, i.e., $\rank(A)=m$.

It was shown in \cite[Theorem 1]{Support} that there exists an optimal solution $\ve z^*$ for \eqref{eq:mainProblem} satisfying the bound
\begin{equation}\label{t:Bound_via_determinant_bound1}
\|\ve z^*\|_0 \le m + \log\left(\frac{\sqrt{\det(AA^{\t})}}{\gcd(A)}\right)\,.
\end{equation}

%
Note that any vertex  solution for \eqref{eq:mainProblem} has the size of support $\le m$. Any non-vertex  solution $\ve z^*$, in its turn, belongs to the interior 
of the face ${\mathcal F}=\KP(A, {\ve b})\cap\{{\ve x}\in \R^n: x_i=0 \mbox{ for } i\notin \supp(\ve z^*)\}$ of the polyhedron $\KP(A, {\ve b})$.
Then the minimum absolute nonzero entry $\rho(\ve z^*)$ is the $\ell_{\infty}$-distance from $\ve z^*$ to the boundary of  ${\mathcal F}$. 
To obtain a refinement of the bound (\ref{t:Bound_via_determinant_bound1}) we will link the minimum absolute nonzero entry and the size of support of solutions to \eqref{eq:mainProblem}.

\begin{thm}
\label{thm:UpperBound}
Let $A \in \Z^{m\times n}$, $\ve b \in \Z^m $, $\ve c \in \Z^n$ and suppose that  \eqref{eq:mainProblem} is feasible. Then there is an optimal solution $\ve z^*$ to \eqref{eq:mainProblem} such that, letting $s=\|{\ve z}^*\|_0$,
\begin{equation}\label{t:Bound_via_transference}
(\rho({\ve z}^*)+1)^{s-m} \le \frac{\sqrt{\det(AA^{\t})}}{\gcd(A)}\,.
\end{equation}
\end{thm}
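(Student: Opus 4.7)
My plan is to select an optimal integer solution $\ve z^{*}$ of \eqref{eq:mainProblem} that is lexicographically minimal in the pair $(\|\ve z^{*}\|_{0},\rho(\ve z^{*}))$ and to verify that this $\ve z^{*}$ satisfies the required inequality. I will set $\tau=\supp(\ve z^{*})$, $s=|\tau|$, $\rho=\rho(\ve z^{*})$, and work with the sublattice
\[
\Lambda_{\tau}=\{\ve u\in\Z^{n}:A\ve u=\zero,\ \supp(\ve u)\subseteq\tau\}
\]
of the integer kernel of $A$ supported on $\tau$. Since both $\|\ve z^{*}\|_{0}\in\{0,\ldots,n\}$ and $\rho(\ve z^{*})\in\Z_{\ge 1}$, such a lexicographic minimum is attained. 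If $s\le m$, the claim reduces to $(\rho+1)^{s-m}\le 1$, which is immediate, so I assume $s>m$.

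The heart of the argument will be a descent lemma: no nonzero $\ve u\in\Lambda_{\tau}$ can satisfy $\|\ve u\|_{\infty}\le\rho$. Indeed, given such a $\ve u$, the inequalities $|u_{i}|\le\rho\le z_{i}^{*}$ for $i\in\tau$ imply that $\ve z^{*}\pm\ve u$ are both nonnegative integer feasible solutions, so optimality of $\ve z^{*}$ forces $\ve c^{\t}\ve u=0$. After possibly replacing $\ve u$ by $-\ve u$, I may assume $\ve u$ has a positive coordinate; let $k\ge 1$ be the largest integer with $\ve z^{*}-k\ve u\ge\zero$. Then $\ve z^{**}=\ve z^{*}-k\ve u$ is integer, feasible, and optimal (since $\ve c^{\t}\ve u=0$). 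At the bottleneck index $i^{*}$ where nonnegativity would fail at $k+1$, one has $z_{i^{*}}^{*}-ku_{i^{*}}\in[0,u_{i^{*}}-1]\subseteq[0,\rho-1]$. If this value is zero then $\|\ve z^{**}\|_{0}<s$; otherwise $\rho(\ve z^{**})<\rho$. In either case $(\|\ve z^{**}\|_{0},\rho(\ve z^{**}))$ strictly precedes $(s,\rho)$ lexicographically, contradicting the choice of $\ve z^{*}$.

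The concluding step will use a Minkowski-type inequality. When $A_{\tau}$ has full row rank $m$, the lattice $\Lambda_{\tau}\subseteq\Z^{\tau}$ has rank $s-m$ and determinant $\sqrt{\det(A_{\tau}A_{\tau}^{\t})}/\gcd(A_{\tau})$, and Bombieri--Vaaler's theorem (equivalently, Minkowski's theorem applied with the $\ell_{\infty}$ unit ball) yields a nonzero $\ve u\in\Lambda_{\tau}$ satisfying $\|\ve u\|_{\infty}^{s-m}\le\det(\Lambda_{\tau})$. Combining this with the descent lemma, which forces $\|\ve u\|_{\infty}\ge\rho+1$, gives
\[
(\rho+1)^{s-m}\le\det(\Lambda_{\tau})\le\frac{\sqrt{\det(AA^{\t})}}{\gcd(A)},
\]
where the last inequality follows from Cauchy--Binet (for the numerator, since the $m\times m$ minors of $A_{\tau}$ form a subfamily of those of $A$) and the fact that the $\gcd$ of a subfamily of $m\times m$ minors dominates $\gcd(A)$. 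The degenerate case $\rank(A_{\tau})<m$ will be handled by passing to a maximal independent subset of rows of $A_{\tau}$, which enlarges the kernel lattice to rank $s-\rank(A_{\tau})>s-m$ and only tightens the resulting bound.

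The main obstacle will be invoking the correct version of the Minkowski / Bombieri--Vaaler inequality and verifying the comparison $\det(\Lambda_{\tau})\le\sqrt{\det(AA^{\t})}/\gcd(A)$, particularly in the rank-deficient case; the descent portion of the proof is a natural refinement of the classical sparsity argument, which corresponds to the special case $\rho=1$ and recovers \eqref{t:Bound_via_determinant_bound1}.
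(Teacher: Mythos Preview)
Your argument is correct and parallels the paper's closely: restrict to $\tau=\supp(\ve z^{*})$, apply Bombieri--Vaaler to the kernel lattice of $A_{\tau}$ to obtain a short nonzero vector, contradict the extremality of $\ve z^{*}$, and finish with the comparison $\sqrt{\det(A_{\tau}A_{\tau}^{\t})}/\gcd(A_{\tau})\le\sqrt{\det(AA^{\t})}/\gcd(A)$. The one substantive difference is the notion of extremality. The paper chooses $\ve z^{*}$ to be an optimal \emph{vertex of the integer hull} $\KP_{I}(A,\ve b)$, which collapses your descent lemma to a single line: if $\|\ve y\|_{\infty}\le\rho(\ve z^{*})$ then $\ve z^{*}\pm\ve y$ are both feasible integer points and $\ve z^{*}$, being their midpoint, cannot be a vertex. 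Your lexicographic-minimum choice works just as well and is somewhat more algorithmic in flavour. One caution on the rank-deficient case $\rank(A_{\tau})<m$: once you delete rows, your Cauchy--Binet justification no longer applies directly (the minors change size, so neither numerator nor denominator is comparable term-by-term), and the inequality $\sqrt{\det(\bar A\bar A^{\t})}/\gcd(\bar A)\le\sqrt{\det(AA^{\t})}/\gcd(A)$ needs its own argument; the paper handles exactly this step by citing \cite[Lemma~2.3]{ADON2017}.
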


\section{Lattices and corner polyhedra}

For linearly independent ${\ve b}_1, \ldots, {\ve b}_l$ in $\R^d$, the set $\Lambda=\{\sum_{i=1}^{l} x_i {\ve b}_i,\, x_i\in \Z\}$ is a $l$-dimensional {\em lattice} with {\em basis} ${\ve b}_1, \ldots, {\ve b}_l$ and {\em determinant}
$\det(\Lambda)=(\det({\ve b}_i\cdot {\ve b}_j)_{1\le i,j\le l})^{1/2}$, where ${\ve b}_i\cdot {\ve b}_j$ is the standard inner product of the basis vectors ${\ve b}_i$ and  ${\ve b}_j$. Recall that the {\em Minkowski sum} $X+Y$ of the sets $X, Y\subset \R^d$ consists of all points ${\ve x}+{\ve y}$ with ${\ve x}\in X$ and ${\ve y}\in Y$. The {\em difference set} $X-X$ is the Minkowski sum of $X$ and $-X$.
For a lattice $\Lambda\subset\R^d$ and ${\ve y}\in \R^d$, the set ${\ve y}+\Lambda$ is an {\em affine lattice} with determinant $\det(\Lambda)$.

Let $\Lambda\subset \Z^d$ be a $d$-dimensional integer lattice.
The point ${\ve x}\in \Z^d_{\ge 0}$ is  called {\em irreducible (with respect to $\Lambda$)}  if for any two points ${\ve y}, {\ve y}'\in \Z^d_{\ge 0}$
with $0\le y_i\le x_i$, $0\le y_i'\le x_i$, $i\in \{1,\ldots, d\}$ the inclusion ${\ve y}-{\ve y}'\in \Lambda$ implies ${\ve y}={\ve y}'$.

\begin{lemma}[Theorem 1 in \cite{Gomory_polyhedra}]\label{t:bound_for_irreducible_points}
If ${\ve x}\in \Z^d_{\ge 0}$ is irreducible with respect to the lattice $\Lambda$ then
\be\label{product_det}
\prod_{i=1}^d (x_i+1) \le \det(\Lambda)\,.
\ee

\end{lemma}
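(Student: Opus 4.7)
The plan is to interpret the irreducibility condition as an injectivity statement for the reduction map from a coordinate box into the quotient group $\Z^d/\Lambda$, and then count.

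First I would fix the notation and consider the axis-aligned lattice box
\[
B = \{{\ve y} \in \Z^d : 0 \le y_i \le x_i \text{ for all } i=1,\ldots,d\},
\]
so that $|B| = \prod_{i=1}^d (x_i + 1)$. The goal is to show $|B| \le \det(\Lambda)$.

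Next I would reformulate the hypothesis. Since $\Lambda \subset \Z^d$ is a full-rank integer sublattice, the quotient group $\Z^d/\Lambda$ is finite of order $[\Z^d:\Lambda] = \det(\Lambda)$ (because $\det(\Z^d)=1$). Consider the natural reduction map
\[
\pi : B \longrightarrow \Z^d/\Lambda, \qquad \pi({\ve y}) = {\ve y} + \Lambda.
\]
The irreducibility hypothesis on ${\ve x}$ says exactly that for all ${\ve y},{\ve y}' \in B$, the condition ${\ve y} - {\ve y}' \in \Lambda$ forces ${\ve y} = {\ve y}'$; in other words, $\pi$ is injective on $B$.

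Finally I would conclude by a cardinality comparison:
\[
\prod_{i=1}^d (x_i+1) = |B| = |\pi(B)| \le |\Z^d/\Lambda| = \det(\Lambda),
\]
which is the asserted inequality.

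There is essentially no obstacle here beyond correctly translating the combinatorial definition of irreducibility into the statement that distinct points of $B$ lie in distinct cosets of $\Lambda$; once that translation is made, the inequality reduces to the elementary fact $[\Z^d:\Lambda]=\det(\Lambda)$ for a full-rank integer sublattice, so no further geometric input (such as Minkowski's theorem or volume estimates) is required.
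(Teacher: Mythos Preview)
Your proof is correct and follows essentially the same approach as the paper's: both interpret irreducibility as saying that the points of the box $B=\{{\ve y}\in\Z^d: 0\le y_i\le x_i\}$ lie in pairwise distinct cosets of $\Lambda$, and then compare $|B|=\prod_i(x_i+1)$ with the number $\det(\Lambda)$ of cosets.
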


\begin{proof}
The lattice $\Lambda$ can be viewed as a subgroup of the additive group $\Z^d$. 
The number of points ${\ve y}\in \Z^d_{\ge 0}$ with $0\le y_i\le x_i$, $i\in\{1,\ldots, d\}$ is equal to $\prod_{i=1}^d (x_i+1)$.
 Since ${\ve x}$ is irreducible, each such ${\ve y}$ corresponds to a unique coset (affine lattice) ${\ve y}+\Lambda$ of $\Lambda$.
Finally notice that there are only $\det(\Lambda)$ different cosets.
\end{proof}

 Let ${\ve r}\in \Z^d$ and consider the affine lattice  $\Gamma= {\ve r}+\Lambda$. We will call the set $\sail(\Gamma)= \conv(\Gamma\cap \R^d_{\ge 0})$ the {\em sail} associated with $\Gamma$. 

\begin{lemma}\label{t:bound_for_irreducible_points}
Every vertex of the sail $\sail(\Gamma)$ is irreducible.
\end{lemma}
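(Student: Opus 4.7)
The plan is to proceed by contraposition: assume $\xx \in \Gamma \cap \R^d_{\ge 0}$ is reducible, and construct two distinct points of $\Gamma \cap \R^d_{\ge 0}$ whose midpoint is $\xx$, thereby witnessing that $\xx$ is not a vertex of $\sail(\Gamma)$.

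Concretely, if $\xx$ is reducible then by definition there exist $\yy, \yy' \in \Z^d_{\ge 0}$ with $0 \le y_i, y_i' \le x_i$ for each $i$, such that $\ve v := \yy - \yy' \in \Lambda$ and $\yy \ne \yy'$ (so that $\ve v \ne \zero$). The key observation is that the componentwise bounds on $\yy$ and $\yy'$ immediately give $|v_i| \le x_i$ for every $i$, which means that both $\xx + \ve v$ and $\xx - \ve v$ have nonnegative coordinates. Moreover, since $\ve v \in \Lambda$ and $\xx \in \Gamma = \ve r + \Lambda$, both $\xx + \ve v$ and $\xx - \ve v$ lie in $\Gamma$. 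Hence they both belong to $\Gamma \cap \R^d_{\ge 0}$.

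To conclude, I would write
\[
\xx = \tfrac{1}{2}(\xx + \ve v) + \tfrac{1}{2}(\xx - \ve v),
\]
exhibiting $\xx$ as the midpoint of two points of $\Gamma \cap \R^d_{\ge 0}$ which, because $\ve v \neq \zero$, are genuinely distinct from $\xx$. This contradicts $\xx$ being a vertex of the convex hull $\sail(\Gamma) = \conv(\Gamma \cap \R^d_{\ge 0})$, completing the argument.

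I do not anticipate a significant obstacle here: the statement is essentially a direct unpacking of the definitions, and the only small subtlety is to verify that the coordinatewise inequalities on $\yy, \yy'$ translate into $|v_i| \le x_i$ (and hence into the nonnegativity of $\xx \pm \ve v$), which is immediate.
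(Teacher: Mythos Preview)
Your proof is correct and essentially identical to the paper's: both assume $\xx$ is reducible, take $\yy,\yy'$ witnessing this, and show $\xx$ is the midpoint of $\xx+(\yy-\yy')$ and $\xx-(\yy-\yy')$, which lie in $\Gamma\cap\R^d_{\ge 0}$. The only cosmetic difference is that the paper verifies nonnegativity by writing $\xx-\yy+\yy'=(\xx-\yy)+\yy'\ge \zero$, whereas you pass through the equivalent observation $|y_i-y_i'|\le x_i$.
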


\begin{proof}

Let ${\ve x}$ be a vertex of $\sail(\Gamma)$. Suppose, to derive a contradiction, that ${\ve x}$ is reducible. Then
there are distinct points ${\ve y}, {\ve y}'\in \Z^d_{\ge 0}$
with $0\le y_i\le x_i$, $0\le y_i'\le x_i$, $i\in  \{1,\ldots, d\}$ such that ${\ve y}-{\ve y}'\in \Lambda$.

Since ${\ve x}-{\ve y}\in \Z^d_{\ge 0}$ and ${\ve x}-{\ve y}'\in \Z^d_{\ge 0}$, the vectors ${\ve v}_1= {\ve x}-{\ve y}+{\ve y}'$ and ${\ve v}_2= {\ve x}-{\ve y}'+{\ve y}$
have nonnegative integer entries. Further, ${\ve v}_1, {\ve v}_2\in \Gamma$ and  ${\ve x}= ({\ve v}_1+{\ve v}_2 )/2$. Therefore ${\ve x}$ is not a vertex of  $\sail(\Gamma)$.
\end{proof}

\begin{lemma}\label{t:sum_product}
For $d\ge 2$ and $x_1, \ldots, x_d\ge 1$ the inequality
\be\label{sum_product}
x_1+\cdots+x_d \le \frac{d(x_1+1)\cdots(x_d+1)}{2^{d}}
\ee
holds. \end{lemma}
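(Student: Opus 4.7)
My plan is to rewrite the inequality as $f(x_1,\ldots,x_d) \geq 0$, where
\[
f(x_1,\ldots,x_d) \;=\; \frac{d\,\prod_{i=1}^{d}(x_i+1)}{2^d} \;-\; \sum_{i=1}^{d} x_i,
\]
and then show that on the domain $x_i \geq 1$ the function $f$ attains its minimum at $(1,\ldots,1)$, where direct computation gives $f(1,\ldots,1) = \frac{d\cdot 2^d}{2^d} - d = 0$. This is a natural move because the inequality is tight exactly at the corner of the domain, suggesting a monotonicity argument in each coordinate.

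The key step is a partial derivative computation: for any coordinate $k$,
\[
\frac{\partial f}{\partial x_k} \;=\; \frac{d\prod_{i\neq k}(x_i+1)}{2^d} \;-\; 1.
\]
Using $x_i \geq 1$ for $i\neq k$, we get $\prod_{i\neq k}(x_i+1) \geq 2^{d-1}$, whence $\frac{\partial f}{\partial x_k} \geq \frac{d\cdot 2^{d-1}}{2^d} - 1 = \frac{d}{2}-1 \geq 0$, using $d\geq 2$. Thus $f$ is nondecreasing in each variable on the closed region $\{x_i \geq 1\}$, so the minimum is attained at $(1,\ldots,1)$ and equals $0$. This gives $f\geq 0$, i.e.\ \eqref{sum_product}.

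I would note in passing that both hypotheses are used in an essential way: the bound on the product factor of the derivative requires $x_i\geq 1$, and the sign of $d/2-1$ requires $d\geq 2$. (Indeed, for $d=1$ the inequality $x_1 \leq (x_1+1)/2$ fails whenever $x_1>1$.)

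The only mild obstacle I anticipate is choosing between this monotonicity approach and a straightforward induction on $d$; the inductive alternative (base $d=2$ reduces to $(x_1-1)(x_2-1)\geq 0$, and the inductive step reduces to $(d-2)(x_d-1)\geq 0$ after using $\prod_{i<d}(x_i+1)\geq 2^{d-1}$) works equally well, but the derivative argument is shorter and makes transparent where equality holds. I would therefore present the derivative version.
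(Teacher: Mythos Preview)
Your proof is correct and follows essentially the same approach as the paper: both arguments establish that the difference between the right-hand and left-hand sides is nondecreasing in each coordinate on $\{x_i\ge 1\}$ (you via the partial derivative $\partial f/\partial x_k \ge d/2-1\ge 0$, the paper via an explicit $\epsilon$-increment using the same inequality $d\cdot 2^{d-1}/2^d\ge 1$), and then check the base case $x_1=\cdots=x_d=1$ where equality holds.
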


\begin{proof}

Suppose that (\ref{sum_product}) is satisfied for $x_1=y_1, \ldots, x_d=y_d$. We will first show that for any $\epsilon>0$ and any $i\in\{1,\ldots, d\}$ the inequality
(\ref{sum_product}) is satisfied for  $x_1=y_1, \ldots, x_{i-1}=y_{i-1}, x_i=y_i+\epsilon,  x_{i+1}=y_{i+1}, \ldots, x_d=y_d$. After possible renumbering, it is sufficient to consider the case $i=1$. We have
\bea
(y_1+\epsilon)+y_2+\cdots+y_d \le \frac{d(y_1+1)\cdots(y_d+1)}{2^{d}} + \epsilon \\
\le \frac{d(y_1+1)\cdots(y_d+1)}{2^{d}} + \epsilon \frac{d(y_2+1)\cdots(y_d+1)}{2^{d}} = \frac{d(y_1+1+\epsilon)\cdots(y_d+1)}{2^{d}}\,.
\eea
To complete the proof it is sufficient to observe that (\ref{sum_product}) holds for $y_1=\cdots =y_d=1$.
\end{proof}


Given $A\in\Z^{m\times n}$ and ${\ve b}\in\Z^m$, we will denote by $\Gamma(A,{\ve b})$  the set of integer points in the affine subspace
\bea
\Sp({A,{\ve b}})= \{{\ve x}\in \R^n: A{\ve x}={\ve b}\}\,,
\eea
 that is
\bea \Gamma(A,{\ve b})=\Sp({A,{\ve b}})\cap \Z^n\,.\eea
The set $\Gamma(A,{\ve b})$  is an affine lattice of the form $\Gamma(A,{\ve b})= {\ve r}+ \Gamma(A)$,
where ${\ve r}$ is any integer vector with $A{\ve r}={\ve b}$  and $\Gamma(A) = \Gamma(A,{\ve 0})$ is the  lattice formed by all integer points in the kernel of the matrix $A$.

Let $\pi_{\gamma}$ denote the projection map from $\R^n$ to $\R^{n-m}$ that forgets the first $m$ coordinates, $\pi_{\gamma}: {\ve u}\mapsto {\ve u}_{\bar\gamma}$ . Recall that  $A_{\gamma}$ is nonsingular. It follows that  the restricted map $\pi_{\gamma} |_{\Sp({A,{\ve b}})}:\Sp({A,{\ve b}})\rightarrow \R^{n-m} $ is bijective. Specifically, for any ${\ve u}_{\bar\gamma}\in \R^{n-m}$ we have
\be\label{structure_of_projection}
\pi_{\gamma} |_{\Sp({A,{\ve b}})}^{-1}({\ve u}_{\bar\gamma})= \left(
\begin{array}{c}{\ve u}_{\gamma}\\ {\ve u}_{\bar\gamma}\end{array}
\right)\,\mbox{ with } {\ve u}_{\gamma}= A_{\gamma}^{-1}({\ve b}-A_{\bar \gamma}{\ve u}_{\bar\gamma})\,.
\ee

For technical reasons, it is convenient to consider the projected affine lattice $\Lambda(A,{\ve b})=\pi_{\gamma}(\Gamma(A,{\ve b}))$ and the projected lattice $\Lambda(A)=\pi_{\gamma}(\Gamma(A))$.
Let ${\ve g}_1, \ldots, {\ve g}_{n-m}$ be a basis of $\Gamma(A)$.
Since the map $\pi_{\gamma} |_{\Sp({A,{\ve 0}})}$ is bijective, the vectors ${\ve b}_1=\pi_{\gamma}({\ve g}_1), \ldots, {\ve b}_{n-m}=\pi_{\gamma}({\ve g}_{n-m})$ form a basis of the lattice $\Lambda(A)$.
Let $G\in \Z^{n \times (n-m)}$ be the matrix with columns  ${\ve g}_1, \ldots, {\ve g}_{n-m}$.
We will denote by $F$ the $(n-m)\times (n-m)$-submatrix of $G$ consisting of the last $n-m$
rows; hence, the columns of $F$ are ${\ve b}_1, \ldots, {\ve b}_{n-m}$. Then $\det(\Lambda(A))=|\det(F)|$.
The rows of the matrix $A$ span the $m$-dimensional rational subspace of $\R^n$ orthogonal to the $(n-m)$-dimensional rational subspace spanned by the columns of $G$. Therefore, by Lemma 5G and Corollary 5I in \cite{Schmidt_Approximations}, we have $|\det(F)|=|\det(A_{\gamma})|/\gcd(A)$ and, consequently,

%
\be\label{determinant_via_submatrix}\det(\Lambda(A))=\frac{|\det(A_{\gamma})|}{\gcd(A)}\,.\ee

\begin{thm}\label{nondegenerate}
Suppose that  $A$ satisfies \eqref{matrix_partition}, ${\ve b}\in \Z^m$ and  $\ve x^*$ is given by \eqref{vertex_assumption}. 
For any vertex ${\ve z}^{*}$ of the corner polyhedron $\CP_{\gamma}(A, {\ve b})$
the bound
\be\label{product_det_z}
\prod_{j\in\bar\gamma}(z_j^*+1)\leq \frac{|\det(A_{\gamma})|}{\gcd(A)}
\ee
holds.
\end{thm}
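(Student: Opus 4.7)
The plan is to transfer the problem, via the projection $\pi_\gamma$, from the corner polyhedron in $\R^n$ to a sail in $\R^{n-m}$, where the irreducibility machinery already developed in the section applies directly.

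First I would observe that $\pi_\gamma$ restricts to an affine bijection $\Sp(A,\ve b) \to \R^{n-m}$, and that this bijection sends the integer points of $\Gamma(A,\ve b)$ onto the affine lattice $\Lambda(A,\ve b)\subset \Z^{n-m}$. The inequalities $\ve x_{\bar\gamma}\ge \zero$ cutting out the corner polyhedron are exactly the pullback of $\R^{n-m}_{\ge 0}$ under $\pi_\gamma$. Since affine bijections commute with taking convex hulls, this gives
\begin{equation*}
\pi_\gamma(\CP_\gamma(A,\ve b)) \;=\; \conv\bigl(\Lambda(A,\ve b)\cap \R^{n-m}_{\ge 0}\bigr) \;=\; \sail(\Lambda(A,\ve b)).
\end{equation*}

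Next, because $\pi_\gamma$ restricted to $\Sp(A,\ve b)$ is an affine isomorphism, it carries vertices to vertices. Hence $\pi_\gamma(\ve z^*) = \ve z^*_{\bar\gamma}$ is a vertex of the sail $\sail(\Lambda(A,\ve b))$. Applying Lemma~\ref{t:bound_for_irreducible_points} (the statement that every vertex of a sail is irreducible), we conclude that $\ve z^*_{\bar\gamma}\in \Z^{n-m}_{\ge 0}$ is irreducible with respect to the lattice $\Lambda(A)$.

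From here the result is one step away. Applying Gomory's bound for irreducible points (the first Lemma labelled \ref{t:bound_for_irreducible_points}) to $\ve z^*_{\bar\gamma}$ and $\Lambda(A)$ gives
\begin{equation*}
\prod_{j\in\bar\gamma}(z_j^*+1) \;\le\; \det(\Lambda(A)),
\end{equation*}
and substituting the identity \eqref{determinant_via_submatrix}, namely $\det(\Lambda(A)) = |\det(A_\gamma)|/\gcd(A)$, yields the claimed bound. There is no real obstacle beyond verifying that the projection behaves well; the only point requiring some care is checking that $\Lambda(A,\ve b)$ is genuinely an affine lattice of the form $\ve r'+\Lambda(A)$ with $\ve r'\in\Z^{n-m}$, so that the notion of sail and the Gomory lemma apply as stated — this follows because $\ve b\in\Z^m$ guarantees that $\Gamma(A,\ve b)$ contains an integer point, whose image under $\pi_\gamma$ supplies the integer translate.
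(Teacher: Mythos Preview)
Your proposal is correct and follows exactly the approach taken in the paper: project via $\pi_\gamma$ to identify $\ve z^*_{\bar\gamma}$ as a vertex of the sail $\sail(\Lambda(A,\ve b))$, invoke irreducibility of sail vertices, apply Gomory's product bound, and substitute \eqref{determinant_via_submatrix}. One small slip: the claim that ``$\ve b\in\Z^m$ guarantees that $\Gamma(A,\ve b)$ contains an integer point'' is not true in general; the correct justification is simply that the hypothesis already hands you a vertex $\ve z^*$ of $\CP_\gamma(A,\ve b)$, so the corner polyhedron is nonempty and hence $\Gamma(A,\ve b)\neq\emptyset$.
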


\begin{proof}

Since $\pi_{\gamma} |_{\Sp({A,{\ve b}})}$ is a bijection, the point ${\ve y}^*=\pi_{\gamma}({\ve z}^{*})$ is a vertex of the sail $\sail(\Lambda(A, {\ve b}))$. The result now follows by  Lemma \ref{t:bound_for_irreducible_points} and \eqref{determinant_via_submatrix}.
\end{proof}

\section{Proof of Theorem \ref{t:IPTP_general}}

Theorem \ref{t:IPTP_general} is an immediate consequence of Theorem \ref{nondegenerate} and the following lemma:

\begin{lemma}
\label{proximity_bound_by_product}
Suppose that  $A$ satisfies \eqref{matrix_partition}, and let ${\ve b}\in \Z^m$. Let $\ve x^*$ be a vertex of $\KP(A, {\ve b})$, and let $\gamma$ be any basis of $A$ containing $\supp({\ve x}^{*})$.
Let ${\ve z}^{*}$ be an integral vector satisfying $A{\ve z}^{*}={\ve b}$, with ${\ve z}_{\bar{\gamma}}^{*}\geq \zero$, and let $r=\|{\ve z}^{*}_{\bar\gamma}\|_0$. Then 
\begin{equation}\label{eq:r_equals_0_lemma}
{\ve x}^{*}={\ve z}^{*} \mbox{ if } r= 0\,,
\end{equation}
\begin{equation}\label{eq:r_equals_1_lemma}
\|{\ve x}^{*}-{\ve z}^{*}\|_{\infty} \le \frac{\Sigma(A)}{|\det(A_{\gamma})|}\prod_{j\in\bar\gamma}(z_j^*+1) - 1 \mbox{ if } r= 1 \mbox{ and }
\end{equation}
\begin{equation}\label{eq:r_greater_1_lemma}
\|{\ve x}^{*}-{\ve z}^{*}\|_{\infty} \,\frac{2^{r}}{r} \leq \frac{\Sigma(A)}{|\det(A_{\gamma})|}\prod_{j\in\bar\gamma}(z_j^*+1) \mbox{ if } r\ge 2\,.
\end{equation}
\end{lemma}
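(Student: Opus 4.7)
The plan is to study the difference $\ve w := \ve z^{*}-\ve x^{*}$, which by construction lies in the kernel of $A$. Since $\ve x^{*}_{\bar{\gamma}}=\ve 0$, we have $\ve w_{\bar{\gamma}}=\ve z^{*}_{\bar{\gamma}}$, and nonsingularity of $A_{\gamma}$ forces
\[
\ve w_{\gamma} \;=\; -A_{\gamma}^{-1}A_{\bar{\gamma}}\,\ve z^{*}_{\bar{\gamma}}.
\]
The case $r=0$ is then immediate, since $\ve z^{*}_{\bar{\gamma}}=\ve 0$ gives $\ve w=\ve 0$. For $r\ge 1$, write $S=\supp(\ve z^{*}_{\bar{\gamma}})\subseteq\bar{\gamma}$, so $|S|=r$ and $z^{*}_{j}\ge 1$ for $j\in S$.

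The key estimate comes from Cramer's rule applied column by column: for each $i\in\gamma$ and $j\in\bar{\gamma}$, the $(i,j)$ entry of $A_{\gamma}^{-1}A_{\bar{\gamma}}$ equals $\det(A_{\gamma}^{(i\to j)})/\det(A_{\gamma})$, where $A_{\gamma}^{(i\to j)}$ is obtained from $A_{\gamma}$ by replacing its $i$-th column by the $j$-th column of $A$. The numerator is (up to sign) an $m\times m$ subdeterminant of $A$, hence bounded in absolute value by $\Sigma(A)$. Setting $K:=\Sigma(A)/|\det(A_{\gamma})|$, the triangle inequality yields
\[
\|\ve w_{\gamma}\|_{\infty} \;\le\; K\sum_{j\in S} z^{*}_{j}.
\]
Since $A_{\gamma}$ is one of the $m\times m$ submatrices defining $\Sigma(A)$, we have $K\ge 1$, and therefore $\|\ve w_{\bar{\gamma}}\|_{\infty}=\max_{j\in S}z^{*}_{j}\le \sum_{j\in S}z^{*}_{j}\le K\sum_{j\in S}z^{*}_{j}$ as well. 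Taking the maximum of the two bounds gives $\|\ve x^{*}-\ve z^{*}\|_{\infty}\le K\sum_{j\in S}z^{*}_{j}$.

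It remains to convert the sum into the product that appears on the right-hand side of the claim. For $r\ge 2$, Lemma \ref{t:sum_product} applied to the positive integers $\{z^{*}_{j}\}_{j\in S}$ with $d=r$ gives
\[
\sum_{j\in S}z^{*}_{j} \;\le\; \frac{r}{2^{r}}\prod_{j\in S}(z^{*}_{j}+1) \;=\; \frac{r}{2^{r}}\prod_{j\in\bar{\gamma}}(z^{*}_{j}+1),
\]
since the extra factors are all $1$. Multiplying by $K$ and rearranging yields \eqref{eq:r_greater_1_lemma}. For $r=1$, Lemma \ref{t:sum_product} does not apply, but if $j$ is the unique index in $S$ then $\sum_{j\in S}z^{*}_{j}=z^{*}_{j}=(z^{*}_{j}+1)-1$, so $K\sum_{j\in S}z^{*}_{j}=K(z^{*}_{j}+1)-K\le K(z^{*}_{j}+1)-1$ because $K\ge 1$; this gives \eqref{eq:r_equals_1_lemma}. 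The only real obstacle is keeping careful track of the cases in which the simple bound $\max\le\sum$ is wasteful and ensuring that $K\ge 1$ absorbs the loss in the $r=1$ case; for $r\ge 2$ the $\sum$-vs-$\prod$ inequality of Lemma \ref{t:sum_product} is precisely what produces the exponential-in-$r$ factor.
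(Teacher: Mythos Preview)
Your proof is correct and follows essentially the same approach as the paper: Cramer's rule bounds the $\gamma$-coordinates of $\ve x^{*}-\ve z^{*}$ by $K\sum_{j\in S}z_j^{*}$, and Lemma~\ref{t:sum_product} converts the sum to the product when $r\ge 2$. The only cosmetic difference is that the paper splits into two cases according to whether $\|\ve x^{*}-\ve z^{*}\|_{\infty}$ is attained on a $\bar\gamma$- or a $\gamma$-coordinate, whereas you absorb the $\bar\gamma$-case into the same bound via $\max_{j\in S}z_j^{*}\le K\sum_{j\in S}z_j^{*}$ using $K\ge 1$; this streamlines the write-up without changing the argument.
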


\begin{proof}
If $r=\|{\ve z}_{\bar\gamma}^{*}\|_0=0$ the vector ${\ve z}^{*}_{\gamma}$ is the unique solution to the system $A_{\gamma}{\ve x}_{\gamma}= {\ve b}$. Therefore
\eqref{eq:r_equals_0_lemma} holds.

In the rest of the proof we assume that $r\ge 1$.
We will set $\delta= \|{\ve x}^*-{\ve z}^{*}\|_{\infty}$ and consider the following two cases. First suppose that there exists an index $j\in\bar\gamma$ such that
$\delta= |x^*_j-z_j^{*}|=z_j^{*}$.  Observe that $r$ of the numbers $z^{*}_{m+1}, \ldots, z^{*}_{n}$ are nonzero. Hence,
\be\label{distance_nonbasic}
(\delta+1) 2^{r-1}\le \prod_{j\in\bar\gamma} (z^{*}_j+1)
\ee
and so
\be\label{distance_nonbasic_rearranged}
\delta \frac{2^{r}}{r}\le   \frac{2}{r}\prod_{j\in\bar\gamma} (z^{*}_j+1) -\frac{2^{r}}{r}\,.
\ee
Since $\Sigma(A)\geq|\det(A_{\gamma})|$, inequality \eqref{distance_nonbasic_rearranged} justifies both \eqref{eq:r_equals_1_lemma} and \eqref{eq:r_greater_1_lemma}.

Now suppose that $\delta= x^*_j-z^{*}_j$ for $j\in \gamma$. We can write
\bea
A_{\gamma}{\ve z}^{*}_{\gamma}+A_{\bar\gamma}{\ve z}^{*}_{\bar\gamma}={\ve b}\mbox{ and }
A_{\gamma}{\ve x}^*_{\gamma}={\ve b}\,.
\eea
Therefore
\be\label{equation_difference_vector}
A_{\gamma}({\ve x}^*_{\gamma}-{\ve z}^{*}_{\gamma})= A_{\bar\gamma}{\ve z}^{*}_{\bar\gamma}\,.
\ee
Given a vector ${\ve v}\in \R^m$, we will denote by $A_{\gamma}^j({\ve v})$ the matrix obtained from $A_{\gamma}$ by replacing its $j$-{th} column with ${\ve v}$. Let $A_{1}, \ldots, A_{n}$  be the columns of the matrix $A$. Solving \eqref{equation_difference_vector} by Cramer's rule, we have
\be\label{Cramer}
\begin{split}
 \delta=x^*_j-z^{*}_j = \frac{ \det (A_{\gamma}^j(A_{\bar\gamma}{\ve z}^{*}_{\bar\gamma}) )}{\det(A_{\gamma})}\\
 =\frac{1}{\det(A_{\gamma})}(z^{*}_{m+1}\det(A_{\gamma}^j(A_{m+1}) )+ \cdots + z^{*}_{n}\det(A_{\gamma}^j(A_{n})))\,.
\end{split}
\ee
If $r=1$, then for some $i\in \bar\gamma$ we can write
\be\label{Product_r_equals_1}
 \delta=\frac{z^{*}_{i}\det(A_{\gamma}^j(A_{i}) )}{\det(A_{\gamma})}= \left(z^{*}_i + 1\right)\frac{\det(A_{\gamma}^j(A_{i}) )}{\det(A_{\gamma})} - \frac{\det(A_{\gamma}^j(A_{i}) )}{\det(A_{\gamma})}\,.
\ee
Equation \eqref{Product_r_equals_1} plus the integrality of $\delta$ imply \eqref{eq:r_equals_1_lemma}.

To settle the case $r\ge 2$, observe that \eqref{Cramer} implies
\be\label{Delta_upper_bound_r_ge_2}
\delta \leq (z^{*}_{m+1}+\cdots+z^{*}_n)\frac{\Sigma(A)}{|\det(A_{\gamma})|}\,.
\ee

Without loss of generality, assume that $z^{*}_i\neq 0$ for $i\in \{m+1, \ldots, m+r\}$ and $z^{*}_i=0$ for $m+r<i\le n$.
Then, by \eqref{Delta_upper_bound_r_ge_2} and Lemma \ref{t:sum_product}, we have
\begin{equation}
\delta \leq \frac{r(z^{*}_{m+1}+1)\cdots(z^{*}_{m+r}+1)\Sigma(A)}{2^{r}|\det(A_{\gamma})|}\,.
\end{equation}
This establishes inequality \eqref{eq:r_greater_1_lemma}.
\end{proof}
%

\section{Proof of Theorem \ref{thm:upper_bound_extension}}

As in the proof of Theorem \ref{t:IPTP_general}, we have that Theorem
\ref{thm:upper_bound_extension} is an immediate consequence of Lemma
\ref{proximity_bound_by_product} and the generalization of Theorem
\ref{nondegenerate} given below. Recall that $\tau$ denotes the
support of $\xx^{*}$, and $\CP_{\tau}(A,\bb)$ denotes the polyhedron
\[
\CP_{\tau}(A,\bb)=\conv\left(\left\{ \xx\in\Z^{n}:A\xx=\bb,\;\xx_{\bar{\tau}}\geq\zero\right\} \right).
\]

\begin{thm}
\label{thm:upper_bound_product_coords_extension}Let $\zz^{*}$ be
a vertex of $\CP_{\tau}(A,\bb)$. Then there exists a basis $\gamma$
of $A$ containing $\tau$ such that
\[
\prod_{j\in\ol{\gamma}}(\zz_{j}^{*}+1)\leq r^{d}\,\frac{\left|\det(A_{\gamma})\right|}{\gcd(A)},
\]
where $r=\left\Vert \zz_{\bar{\gamma}}^{*}\right\Vert _{0}$ and $d=m-\left|\tau\right|$.
\end{thm}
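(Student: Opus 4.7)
The plan mirrors the proof of Theorem~\ref{nondegenerate} but adapts to the degeneracy $d = m - |\tau| \geq 0$. Since $\xx^*$ is a vertex of $\KP(A,{\ve b})$ with support $\tau$, the columns of $A_\tau$ are linearly independent, so $\tau$ extends to a basis $\gamma$ of $A$ by adjoining $d$ indices from $\bar\tau$. I would tailor this extension so that $\zz^*_{\gamma \setminus \tau}$ is as small as possible: preferentially selecting indices from $\zeta = \{j \in \bar\tau : z_j^* = 0\}$ whenever the resulting set remains independent in the column matroid of $A$, and otherwise applying a matroidal exchange within $\bar\tau$.

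After projecting via $\pi_\gamma$, the box $B = \{\yy \in \Z^{n-m} : \zero \leq \yy \leq \zz^*_{\bar\gamma}\}$ of cardinality $\prod_{j \in \bar\gamma}(z_j^* + 1)$ decomposes into at most $\det \Lambda(A) = |\det A_\gamma|/\gcd(A)$ cosets of $\Lambda(A)$ in $\Z^{n-m}$, so it suffices to bound by $r^d$ the number of box points lying in any single $\Lambda(A)$-coset. The vertex property of $\zz^*$ in $\CP_\tau(A,{\ve b})$ reformulates as the statement that no nonzero $\tilde{\vv} \in \Gamma(A)$ satisfies $|\tilde{v}_j| \leq z_j^*$ for every $j \in \bar\tau$. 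For two distinct box points in a common coset, the difference lifts to such a nonzero $\tilde{\vv}$ whose $\bar\gamma$-coordinates automatically satisfy the bound, and so the vertex condition forces some $(\gamma \setminus \tau)$-coordinate of the lift to exceed the corresponding $z_k^*$ in absolute value.

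Introducing the affine map $L(\yy) = (A_\gamma^{-1}({\ve b} - A_{\bar\gamma}\yy))_{\gamma \setminus \tau}$, which sends $\zero \mapsto \zero$ and $\zz^*_{\bar\gamma} \mapsto \zz^*_{\gamma \setminus \tau}$, its restriction to a single coset is injective by the vertex property, with image in $\Z^d$ a set whose pairwise differences exceed $z_k^*$ in some coordinate. A pigeonhole count, calibrated by the basis choice of the first step, should limit this image to at most $r^d$ points---intuitively $d$ coordinates of $L$ each contributing a factor of $r = \|\zz^*_{\bar\gamma}\|_0$---which combined with the coset count yields $\prod_{j \in \bar\gamma}(z_j^* + 1) \leq r^d \cdot |\det A_\gamma|/\gcd(A)$. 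The main obstacle is this final pigeonhole estimate: turning the heuristic ``$d$ coordinates, each a factor of $r$'' into a rigorous bound requires delicate range estimates on $L(B) \subset \R^d$ relative to $\zz^*_{\gamma \setminus \tau}$, and it is precisely the tailored choice of $\gamma$ in the first step that is meant to make this count work out to exactly $r^d$.
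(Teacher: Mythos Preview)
Your overall framework---project via $\pi_\gamma$, exploit the vertex property of $\zz^*$ as an irreducibility statement, and compare $\prod_{j\in\bar\gamma}(z_j^*+1)$ against $\det\Lambda(A)=|\det A_\gamma|/\gcd(A)$---matches the paper's. But two ingredients diverge, and both matter.

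First, the choice of $\gamma$. The paper does not try to make $\zz^*_{\gamma\setminus\tau}$ small; instead (after first reducing to the case $\bar\tau\subseteq\supp(\zz^*)$ by passing to the face indexed by $\mu=\tau\cup\supp(\zz^*)$) it picks $\gamma\supseteq\tau$ to \emph{maximize} $|\det A_\gamma|\cdot\prod_{i\in\gamma}(z_i^*+1)$. Via Cramer's rule this extremal choice forces
\[
z_i^*+1\;\geq\;\bigl|(A_\gamma^{-1}A_{\bar\gamma})_{i,j}\bigr|\,(z_j^*+1)\qquad\text{for all }i\in\gamma\setminus\tau,\ j\in\bar\gamma,
\]
and hence $z_i^*+1\geq\frac{1}{r}\sum_{j\in\bar\gamma}\bigl|(A_\gamma^{-1}A_{\bar\gamma})_{i,j}\bigr|(z_j^*+1)$. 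This is exactly the ``range versus spacing'' control your pigeonhole heuristic needs: the $i$-th coordinate of your map $L$ varies over the box by at most $r(z_i^*+1)$. Your rule of preferring indices with $z_j^*=0$ does not produce this inequality, and without it there is no reason the factor should be $r$ rather than something depending on the entries of $A_\gamma^{-1}A_{\bar\gamma}$.

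Second, even with the right $\gamma$, the paper does not attempt your discrete coset count; it replaces it with a volumetric argument that cleanly avoids the boundary effects you would run into. It thickens the integer box to the open real box $B=\prod_{j\in\bar\gamma}\bigl(-\tfrac12,\,z_j^*+\tfrac12\bigr)$, intersects it with the $d$ centred slabs $S_i=\{-\tfrac12<\qq_i^\top\xx<z_i^*+\tfrac12\}$ for $i\in\gamma\setminus\tau$, and calls the result $P$. Irreducibility of $\zz^*$ (now with respect to the cone encoding all $\bar\tau$-constraints) gives $(P-P)\cap\Lambda(A)=\{\zero\}$, so Blichfeldt's lemma yields $\vol(P)\leq\det\Lambda(A)$. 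For the lower bound, Brunn's concavity principle implies that slicing a centrally symmetric body by a centred slab of relative width $\lambda$ retains at least a $\lambda$-fraction of the volume; the basis choice guarantees each $\lambda_i\geq 1/r$, so after $d$ slabs $\vol(P)\geq r^{-d}\vol(B)=r^{-d}\prod_{j\in\bar\gamma}(z_j^*+1)$. This is the missing idea behind your ``$d$ coordinates, each a factor of $r$'' heuristic, and it is what turns the heuristic into a proof.
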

Theorem \ref{thm:upper_bound_product_coords_extension} is proved
over the remainder of this section.

\subsection{Convex geometry lemmas}
For an affine subspace $F\subseteq\R^d$, let $\vol_F(\cdot)$ denote the standard Lebesgue measure with respect to $F$. We denote $\vol_{\R^d}(\cdot)$ simply by $\vol_d(\cdot)$.
\begin{lemma}[{Blichfeldt's lemma \cite[Chapter III, Theorem I]{cassels1996introduction}}]
\label{lem:Blichfeldt lemma}Let $K\subseteq\R^{d}$ be bounded,
nonempty, Lebesgue measurable and let $\Lambda$ be a full-dimensional
lattice in $\R^{d}$. Suppose that the difference set $K-K$ contains no nonzero
lattice points from $\Lambda$. Then $\vol_{d}(K)\leq\det(\Lambda)$.\qed
\end{lemma}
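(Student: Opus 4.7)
The plan is to give the classical tiling argument. Fix a measurable fundamental domain $F$ of $\Lambda$ (for instance, the half-open parallelepiped spanned by a basis of $\Lambda$), so that the translates $\{F+\lambda : \lambda\in\Lambda\}$ partition $\R^d$ and $\vol_d(F)=\det(\Lambda)$. First I would decompose
\[
K \;=\; \bigsqcup_{\lambda\in\Lambda}\bigl(K\cap(F+\lambda)\bigr),
\]
so by countable additivity
\[
\vol_d(K) \;=\; \sum_{\lambda\in\Lambda}\vol_d\bigl(K\cap(F+\lambda)\bigr) \;=\; \sum_{\lambda\in\Lambda}\vol_d\bigl((K-\lambda)\cap F\bigr),
\]
using translation invariance of Lebesgue measure.

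Next I would use the hypothesis to argue that the translated pieces $(K-\lambda)\cap F\subseteq F$, as $\lambda$ ranges over $\Lambda$, are pairwise disjoint. Indeed, if a point $p$ lies in both $(K-\lambda_1)\cap F$ and $(K-\lambda_2)\cap F$ for $\lambda_1\neq\lambda_2$, then $p+\lambda_1$ and $p+\lambda_2$ are distinct points of $K$ whose difference is $\lambda_1-\lambda_2$, a nonzero element of $\Lambda$ lying in $K-K$, contradicting the assumption.

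Given this disjointness, I would conclude
\[
\sum_{\lambda\in\Lambda}\vol_d\bigl((K-\lambda)\cap F\bigr) \;\le\; \vol_d(F) \;=\; \det(\Lambda),
\]
which combined with the previous display yields $\vol_d(K)\le\det(\Lambda)$, as required. The main subtlety — really the only one — is the measurability bookkeeping: one must check that each $K\cap(F+\lambda)$ is Lebesgue measurable (immediate, since $K$ is measurable and $F+\lambda$ is) and that boundedness of $K$ makes the sum effectively finite, so that countable additivity applies without issue. Everything else is a direct application of translation invariance and the no-nonzero-difference hypothesis.
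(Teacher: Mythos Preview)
Your argument is correct and is precisely the classical Blichfeldt tiling proof. The paper itself does not prove this lemma at all; it simply cites Cassels and closes with \qedsymbol, so there is nothing to compare against beyond noting that your proof is the standard one found in that reference.
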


\begin{thm}[{Brunn's concavity principle \cite[Theorem 1.2.1]{artstein2015asymptotic}}]
\label{lem:Brunn concavity}Let $K$ be a convex body, and let $F$ be a $k$-dimensional subspace of $\R^d$. Then the function $g:F^{\perp}\rightarrow\R$ defined by
\[
g(x)=\vol_{F+x}(K\cap(F+x))^{1/k}
\]
is concave on its support.
\end{thm}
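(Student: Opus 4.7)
The plan is to deduce Brunn's concavity principle directly from the Brunn-Minkowski inequality applied in dimension $k$ to parallel slices of $K$, which I will take as a black box. Fix two points $x_0, x_1 \in F^\perp$ in the support of $g$ and a parameter $\lambda \in [0,1]$; set $x_\lambda = (1-\lambda) x_0 + \lambda x_1$, and write $K_i = K \cap (F + x_i)$ for $i \in \{0, 1, \lambda\}$, each viewed as a convex subset of the affine $k$-plane $F + x_i$.

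The first key step is a short geometric observation: by convexity of $K$ and linearity of the orthogonal projection onto $F^\perp$, one has the inclusion
\[
(1-\lambda) K_0 + \lambda K_1 \subseteq K_\lambda.
\]
Indeed, any $y_0 \in K_0$ and $y_1 \in K_1$ combine to give $(1-\lambda) y_0 + \lambda y_1 \in K$ by convexity, and its component along $F^\perp$ is exactly $x_\lambda$, so the combination lies in $F + x_\lambda$.

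Next, identify each affine slice $K_i$ (for $i \in \{0,1\}$) with a convex body in $F$ via the translation $y \mapsto y - x_i$, which preserves $k$-dimensional Lebesgue measure. Under this identification, the Minkowski combination on the left-hand side of the inclusion becomes $(1-\lambda)(K_0 - x_0) + \lambda(K_1 - x_1)$ in the linear space $F \cong \R^k$. Applying the Brunn-Minkowski inequality in $\R^k$ to these two convex bodies, together with the displayed inclusion and monotonicity of the slice volume, yields
\[
\vol_{F+x_\lambda}(K_\lambda)^{1/k} \geq (1-\lambda)\vol_{F}(K_0 - x_0)^{1/k} + \lambda\vol_F(K_1 - x_1)^{1/k},
\]
which by translation invariance of volume reads $g(x_\lambda) \geq (1-\lambda) g(x_0) + \lambda g(x_1)$. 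The same estimate simultaneously shows that the support of $g$ is convex, since $g(x_0), g(x_1) > 0$ forces $g(x_\lambda) > 0$.

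The only nontrivial ingredient in this argument is the Brunn-Minkowski inequality in $\R^k$; all the remaining steps are routine manipulations of Minkowski sums, affine translations and Lebesgue measure. If one wished to produce a fully self-contained proof, the real obstacle would be establishing Brunn-Minkowski itself, which can be done by the classical induction on dimension or through functional methods; but since the statement is quoted here from a standard reference, it is legitimate to invoke it as a black box.
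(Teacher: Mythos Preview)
Your argument is correct and is in fact the standard derivation of Brunn's concavity principle from the Brunn--Minkowski inequality. The paper, however, does not supply its own proof of this statement: it is quoted verbatim as \cite[Theorem~1.2.1]{artstein2015asymptotic} and used as a black box in the proof of Proposition~\ref{prop:lower_bound_volume_slab}. So there is nothing to compare against; you have simply filled in a proof where the paper chose to cite one.
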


By a \emph{slab} we mean the nonempty intersection of two halfspaces
with antiparallel normals. Let $\qq\in\R^d$ be nonzero.
The \emph{width} of a set $K\subseteq\R^{d}$ along $\qq$ is defined
to be
\[
w_{\qq}(K):=\left(\sup_{\xx\in K}\qq^{\t}\xx\right)-\left(\inf_{\xx\in K}\qq^{\t}\xx\right).
\]

\begin{prop}
\label{prop:lower_bound_volume_slab}Let $K$ be a centrally symmetric
convex body with centre $\cc$. Let $S$ be a slab centred at $\cc$
with a facet normal $\qq$. If $S$ does not contain $K$, then
\[
\vol_{d}(K\cap S)\geq\frac{w_{\qq}(S)}{w_{\qq}(K)}\cdot\vol_{d}(K).
\]
\end{prop}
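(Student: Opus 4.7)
The plan is to translate so that $\cc=\zero$; then $K=-K$ and the slab may be written as $S=\{\xx:|\qq^{\t}\xx|\le h\}$ for some $h>0$. Central symmetry of $K$ gives $w_{\qq}(K)=2b$, where $b=\max_{\xx\in K}\qq^{\t}\xx$, while $w_{\qq}(S)=2h$. The hypothesis that $S$ does not contain $K$ then reads $h<b$, and the inequality to prove reduces to
\[
\vol_d(K\cap S)\ \ge\ \frac{h}{b}\,\vol_d(K).
\]

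Next I would slice $K$ by hyperplanes normal to $\qq$. For $t\in[-b,b]$, set $g(t)=\vol_{d-1}\!\bigl(K\cap\{\xx:\qq^{\t}\xx=t\}\bigr)$. Fubini yields $\vol_d(K)=\|\qq\|^{-1}\int_{-b}^{b}g(t)\,dt$ and $\vol_d(K\cap S)=\|\qq\|^{-1}\int_{-h}^{h}g(t)\,dt$, so the $\|\qq\|^{-1}$ factors cancel in the ratio and can be ignored. Brunn's concavity principle (Theorem~\ref{lem:Brunn concavity}), applied to the one-dimensional subspace spanned by $\qq$, asserts that $g^{1/(d-1)}$ is concave on $[-b,b]$. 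Central symmetry of $K$ forces $g(-t)=g(t)$, and any even concave function attains its maximum at the centre of symmetry and is non-increasing on the positive half of its support. Consequently $g$ itself is non-increasing on $[0,b]$.

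To finish I would set $F(s)=2\int_{0}^{s}g(t)\,dt$ and show that $F(s)/s$ is non-increasing on $(0,b]$. The derivative is
\[
\frac{d}{ds}\frac{F(s)}{s}=\frac{2}{s^{2}}\!\left(s\,g(s)-\int_{0}^{s}g(t)\,dt\right),
\]
which is nonpositive precisely because $g$ is non-increasing on $[0,b]$: the endpoint value $g(s)$ is bounded above by the running average $\tfrac{1}{s}\int_{0}^{s}g(t)\,dt$. Evaluating at $s=h\le b$ therefore gives $F(h)/h\ge F(b)/b$, which is exactly the desired inequality after restoring the common factor $\|\qq\|^{-1}$.

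The main obstacle I anticipate is the unimodality step: extracting that $g$ is non-increasing on $[0,b]$ requires genuinely combining Brunn's concavity with the central symmetry of $K$, and one must check that an even concave function really is monotone on each half of its support (a short chord argument suffices). Once that is in hand, the remaining manipulations are elementary one-variable calculus.
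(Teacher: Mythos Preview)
Your proof is correct and follows essentially the same route as the paper's: translate to the origin, slice by hyperplanes orthogonal to $\qq$, use Brunn's concavity together with evenness to deduce that the cross-sectional volume is non-increasing on the positive half, and then compare running averages. The only cosmetic differences are that the paper parametrizes the slices over $[-1,1]$ rather than $[-b,b]$ and writes the average inequality $\tfrac{1}{\delta}\int_0^{\delta}f\ge\int_0^{1}f$ directly, whereas you justify the equivalent fact by differentiating $F(s)/s$; also note that in invoking Theorem~\ref{lem:Brunn concavity} the subspace $F$ should be the $(d-1)$-dimensional hyperplane $\qq^{\perp}$ (so that $F^{\perp}$ is the line spanned by $\qq$), not the line itself.
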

\begin{proof}
Without loss of generality, we may assume $\cc$ is the origin. For
$\lambda\in[-1,1]$, define the affine hyperplane
\[
L_{\lambda}:=\left\{ \xx\in\R^{d}:\qq^{\t}\xx=\lambda \cdot w_{\qq}(K)/2\right\} \,.
\]
Let $K_{\lambda}:=K\cap L_{\lambda}$, and define the cross-sectional
volume
\[
f(\lambda):=\vol_{L_{\lambda}}\left(K_{\lambda}\right).
\]
By symmetry, we have $K_{\lambda}=-K_{-\lambda}$. Hence, $f$ is an even function on $[-1,1]$, which means that $g(\lambda):=(f(\lambda))^{1/(d-1)}$ is an even function as well. Since $g$ is concave on $[-1,1]$ by Brunn's concavity principle, we have that $g$, and therefore $f$, is a decreasing function on $[0,1]$. 

Now let $\delta:=w_{\qq}(S)/w_{\qq}(K)$. By Fubini's theorem, symmetry, and monotonicity on $[0,1]$, we
conclude
\[
\vol_{d}(K\cap S)=\int_{-\delta}^{\delta}f(\lambda)d\lambda =2\int_{0}^{\delta}f(\lambda)d\lambda \geq 2\delta\int_{0}^{1}f(\lambda)d\lambda=\frac{w_{\qq}(S)}{w_{\qq}(K)}\cdot\vol_{d}(K).\qedhere
\]
\end{proof}
The notion of irreducibility from Lemma \ref{t:bound_for_irreducible_points}
can be mildly generalized as follows. Let $C$ be a pointed cone.
Let $\Lambda\subset\Z^{d}$ be a $d$-dimensional integer lattice.
The point ${\ve x}\in C\cap\Z^{d}$ is called {\em irreducible (with
respect to $\Lambda$ and $C$)} if
\[
(-\xx+C)\cap(\xx-C)\cap\Lambda=\{\zero\}.
\]

Let $\ve r\in\Z^{d}$ and consider the affine lattice $\Gamma=\ve r+\Lambda$.
We will call the set $\sail(\Gamma,C)=\conv(\Gamma\cap C)$ the {\em
sail} associated with $\Gamma$ and $C$.
\begin{lemma}
\label{t:bound_for_irreducible_points_cone} Every vertex of the sail
$\sail(\Gamma,C)$ is irreducible.
\end{lemma}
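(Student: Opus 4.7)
The plan is to directly mirror the proof of Lemma \ref{t:bound_for_irreducible_points} (the orthant case), swapping the coordinatewise bounds $0\le y_i\le x_i$ for the cone-theoretic definition of reducibility. The key observation is that a nonzero element $\ww$ of $(-\xx+C)\cap(\xx-C)\cap\Lambda$ is exactly a lattice vector $\ww\ne\zero$ for which both $\xx+\ww$ and $\xx-\ww$ lie in $C$, which is the natural replacement for the two witnesses $\yy,\yy'$ used in the orthant proof (there, one could take $\ww=\yy-\yy'$ and verify that $\xx\pm\ww$ lie in $\R^d_{\ge 0}$).

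The argument I would carry out is as follows. Assume for contradiction that $\xx$ is a vertex of $\sail(\Gamma,C)$ but is reducible, and choose a nonzero $\ww\in(-\xx+C)\cap(\xx-C)\cap\Lambda$. First, I would verify that $\xx+\ww$ and $\xx-\ww$ both belong to $\Gamma\cap C$: membership in $C$ is immediate from the choice of $\ww$, and membership in the affine lattice $\Gamma=\ve r+\Lambda$ follows from $\xx\in\Gamma$ together with $\pm\ww\in\Lambda$. Second, since $\ww\ne\zero$ the two points $\xx+\ww$ and $\xx-\ww$ are distinct, yet
\[
\xx=\tfrac{1}{2}\bigl((\xx+\ww)+(\xx-\ww)\bigr),
\]
expressing $\xx$ as the midpoint of two distinct points of $\Gamma\cap C\subseteq\sail(\Gamma,C)$. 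This contradicts the assumption that $\xx$ is a vertex.

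There is essentially no obstacle here: pointedness of $C$ is not used in this lemma (it presumably enters later when the product bound $\prod(x_i+1)\le\det(\Lambda)$ is replaced by the volumetric estimate via Blichfeldt and Brunn). The only point needing any care is ensuring $\xx+\ww\ne\xx-\ww$, which is simply the nonvanishing of $\ww$ guaranteed by the definition of reducibility.
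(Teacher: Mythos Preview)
Your argument is correct and is essentially the same as the paper's: both assume reducibility, take a nonzero lattice vector $\ww$ (the paper calls it $\ve\lambda$) in $(-\xx+C)\cap(\xx-C)\cap\Lambda$, observe that $\xx\pm\ww\in\Gamma\cap C$, and conclude that $\xx$ is the midpoint of two distinct points of the sail. The only cosmetic difference is that the paper explicitly notes that $\xx\in\Gamma$ because it is a vertex of $\conv(\Gamma\cap C)$, whereas you take this for granted.
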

\begin{proof}
Let ${\ve x}$ be a vertex of $\sail(\Gamma,C)$. Suppose, to derive
a contradiction, that ${\ve x}$ is reducible. Then there exists nonzero
$\ve{\lambda}\in\Lambda$ and vectors ${\ve y},{\ve y}'\in C$ such
that $\ve{\lambda}=-\xx+\yy=\xx-\yy'$. The fact that $\xx$ is a
vertex of $\sail(\Gamma,C)$ implies $\xx\in\Gamma$, and therefore
both $\yy=\ve{\lambda}+\xx$ and $\yy'=-\ve{\lambda}+\xx$ are contained
in $\Gamma\cap C$, and hence in $\sail(\Gamma,C)$. Since $\ve{\lambda}$
is nonzero, we conclude that $\xx=(\yy+\yy')/2$ is not a vertex of
$\sail(\Gamma,C)$.
\end{proof}

\subsection{Lemmas for Theorem \ref{thm:upper_bound_product_coords_extension}}
In this subsection we further assume the condition that $\bar{\tau}\subseteq\supp(\zz^{*})$. We fix a basis $\gamma$ of $A$ containing $\tau$ such that for all $i\in\gamma\backslash\tau$,
\be
\label{eq:careful_choice_of_gamma}
z_{i}^{*}+1\geq\frac{1}{r}\sum_{j\in\bar{\gamma}}\left|(A_{\gamma}^{-1}A_{\bar{\gamma}})_{i,j}(z_{j}^{*}+1)\right|.
\ee
The existence of such a basis $\gamma$ is justified in Proposition \ref{prop:There_exists_a_basis}. Without loss of generality, we continue with our notational assumption that $\gamma=\left\{ 1,2,\ldots,m\right\} $
and we further assume $\gamma\backslash\tau=\left\{ 1,2,\ldots,d\right\} .$ We denote the rows of
the matrix $-A_{\gamma}^{-1}A_{\bar{\gamma}}$ by $\qq_{1}^{\t},\qq_{2}^{\t},\ldots,\qq_{m}^{\t}$. Note that the equality $A\zz^{*}=\bb$
implies $\qq_{i}^{\t}\zz_{\bar{\gamma}}^{*}=z_{i}^{*}-x_{i}^{*}$ for
all $i\in\gamma$.

\begin{prop}
\label{prop:There_exists_a_basis}Assume $\bar{\tau}\subseteq\supp(\zz^{*})$.
Then there exists a basis $\gamma$ of $A$ containing $\tau$ satisfying inequality \eqref{eq:careful_choice_of_gamma}.
\end{prop}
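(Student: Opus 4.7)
The plan is a matroid exchange argument driven by an extremal choice of basis. First I want to record a simple consequence of the standing assumption $\bar\tau\subseteq\supp(\zz^{*})$: since any basis $\gamma\supseteq\tau$ satisfies $\bar\gamma\subseteq\bar\tau$, and since $\zz^{*}_{\bar\tau}\geq\zero$ together with $\bar\tau\subseteq\supp(\zz^{*})$ forces $z_j^{*}>0$ for every $j\in\bar\tau$, we conclude that $z_j^{*}>0$ for every $j\in\bar\gamma$. In particular, $r=\|\zz^{*}_{\bar\gamma}\|_{0}=\left|\bar\gamma\right|=n-m$, so the target inequality \eqref{eq:careful_choice_of_gamma} is an inequality involving a sum of $r$ terms on the right.

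Next I would define the potential function
\[
\Psi(\gamma)\;:=\;|\det A_{\gamma}|\,\prod_{i\in\gamma\setminus\tau}(z_{i}^{*}+1)
\]
on the (finite, nonempty) collection of bases $\gamma$ of $A$ containing $\tau$. Nonemptiness of this collection follows because $\xx^{*}$ is a vertex of $\KP(A,\bb)$, so the columns $A_{\tau}$ are linearly independent and extend to a basis of the column space of $A$. Let $\gamma$ be a basis in this collection maximizing $\Psi$.

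Now fix $i\in\gamma\setminus\tau$ and $j\in\bar\gamma$, and let $M:=A_{\gamma}^{-1}A_{\bar\gamma}$. Consider the swap $\gamma':=(\gamma\setminus\{i\})\cup\{j\}$. A direct computation, expressing $A_{\gamma'}$ in the basis of columns given by $A_{\gamma}$, shows that $|\det A_{\gamma'}|=|M_{i,j}|\cdot|\det A_{\gamma}|$; consequently $\gamma'$ is a basis of $A$ (necessarily containing $\tau$, since $i\notin\tau$) if and only if $M_{i,j}\neq 0$. When $M_{i,j}\neq 0$,
\[
\frac{\Psi(\gamma')}{\Psi(\gamma)}=|M_{i,j}|\cdot\frac{z_{j}^{*}+1}{z_{i}^{*}+1},
\]
and maximality of $\gamma$ forces $|M_{i,j}|(z_{j}^{*}+1)\leq z_{i}^{*}+1$. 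When $M_{i,j}=0$, the same inequality holds trivially because $z_{i}^{*}\geq 0$. Summing over $j\in\bar\gamma$ and using $\left|\bar\gamma\right|=r$ yields
\[
\sum_{j\in\bar\gamma}|M_{i,j}|(z_{j}^{*}+1)\;\leq\;r(z_{i}^{*}+1),
\]
which is exactly \eqref{eq:careful_choice_of_gamma} since $(A_{\gamma}^{-1}A_{\bar\gamma})_{i,j}=M_{i,j}$.

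The only nontrivial step is the identification of the correct potential $\Psi$. A naive choice such as $\prod_{i\in\gamma\setminus\tau}(z_{i}^{*}+1)$ fails when $|M_{i,j}|>1$; inserting the factor $|\det A_{\gamma}|$ is precisely what absorbs $|M_{i,j}|$ under a column swap, so that the matroid exchange inequality produces the desired per-coordinate bound. After this is in place, the argument is a short extremality argument together with the remark that $r=|\bar\gamma|$.
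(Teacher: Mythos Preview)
Your proof is correct and essentially identical to the paper's. Both maximize the potential $|\det A_\gamma|\prod(z_i^*+1)$ over bases $\gamma\supseteq\tau$ (your product runs over $\gamma\setminus\tau$, the paper's over $\gamma$, but these differ by the constant factor $\prod_{i\in\tau}(z_i^*+1)$), then use the column-swap identity $|\det A_{\gamma'}|=|M_{i,j}|\,|\det A_\gamma|$---which the paper phrases via Cramer's rule---to obtain the per-entry bound $|M_{i,j}|(z_j^*+1)\le z_i^*+1$, and average over $j\in\bar\gamma$ using $r=|\bar\gamma|$.
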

\begin{proof}
Among all bases of $A$ containing $\tau$, choose a basis $\gamma$
so that the quantity $\left|\det A_{\gamma}\right|\cdot\prod_{i\in\gamma}(z_{i}^{*}+1)$
is as large as possible. If $i\in\gamma$ and $j\in\bar{\gamma}$, then by Cramer's rule we
have
\[
q_{i,j}=-\frac{\det(A_{\gamma}^{i}(A_{j}))}{\det(A_{\gamma})},
\]
where $A_{\gamma}^{i}(A_{j})$ denotes the matrix obtained by replacing
column $i$ of $A_{\gamma}$ with column $j$ of $A$. The choice
of $\gamma$ implies that if $i\in\gamma\backslash\tau$ and $j\in\bar{\gamma},$
then
\[
z_{i}^{*}+1\geq\left|\frac{\det(A_{\gamma}^{i}(A_{j}))}{\det(A_{\gamma})}(z_{j}^{*}+1)\right|=\left|q_{i,j}(z_{j}^{*}+1)\right|.
\]
The condition $\bar{\tau}\subseteq\supp(\zz^{*})$ implies $r=\left|\bar{\gamma}\right|$.
Hence, for all $i\in\gamma\backslash\tau$, we have
\[
z_{i}^{*}+1\geq\frac{1}{r}\sum_{j\in\bar{\gamma}}\left|q_{i,j}(z_{j}^{*}+1)\right|.\qedhere
\]
\end{proof}

Let $\oneone_{n-m}\in\R^{n-m}$ be the vector of all ones, and define, for
each $i\in\gamma\backslash\tau$,
\begin{align*}
S_{i} & :=\left\{ \xx\in\R^{n-m}:-\tfrac{1}{2}<\qq_{i}^{\t}\xx<\qq_{i}^{\t}\zz_{\bar{\gamma}}^{*}+\tfrac{1}{2}\right\} .
\end{align*}
Also define
\begin{align*}
B & :=\left\{ \xx\in\R^{n-m}:-\tfrac{1}{2}\oneone_{n-m}<\xx<\zz_{\bar{\gamma}}^{*}+\tfrac{1}{2}\oneone_{n-m}\right\},
\end{align*}
and for each $i\in\gamma\backslash\tau$, let $P_{i}:=P_{i-1}\cap S_{i}$ with $P_{0}=B$. Let $P:=P_d$.
\begin{lemma}
\label{lem:lower_bound_volume}Assume $\bar{\tau}\subseteq\supp(\zz^{*})$.
Then
\[
\vol_{n-m}(P)\geq\frac{1}{r^{d}}\prod_{j\in\bar{\gamma}}(z_{j}^{*}+1).
\]
\end{lemma}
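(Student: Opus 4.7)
The plan is to apply Proposition \ref{prop:lower_bound_volume_slab} inductively to the chain $B = P_0 \supseteq P_1 \supseteq \cdots \supseteq P_d = P$, showing that each slab intersection costs at most a factor of $r$ in volume, so that the total loss from $\vol_{n-m}(B) = \prod_{j \in \bar\gamma}(z_j^*+1)$ is at most $r^d$.

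First I would record the geometric setup: $B$ is an axis-aligned box of volume $\prod_{j \in \bar\gamma}(z_j^*+1)$, centered at the point $\cc := \tfrac{1}{2}\zz_{\bar\gamma}^{*}$. For each $i \in \gamma\setminus\tau$, the slab $S_i$ is centrally symmetric about $\cc$ along $\qq_i$, since the midpoint of the interval $(-\tfrac{1}{2},\,\qq_i^{\t}\zz_{\bar\gamma}^{*}+\tfrac{1}{2})$ equals $\tfrac{1}{2}\qq_i^{\t}\zz_{\bar\gamma}^{*} = \qq_i^{\t}\cc$. Hence every $P_i$ is a centrally symmetric convex body with centre $\cc$, so Proposition \ref{prop:lower_bound_volume_slab} applies at each step.

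Next I would compute the two relevant widths. For the slab, $w_{\qq_i}(S_i) = \qq_i^{\t}\zz_{\bar\gamma}^{*}+1$; and since $i \in \gamma\setminus\tau$ gives $x_i^{*}=0$, the identity $\qq_i^{\t}\zz_{\bar\gamma}^{*} = z_i^{*}-x_i^{*}$ noted just before Proposition \ref{prop:There_exists_a_basis} yields $w_{\qq_i}(S_i) = z_i^{*}+1$. For the box, $w_{\qq_i}(B) = \sum_{j \in \bar\gamma}|q_{i,j}|(z_j^{*}+1)$, which by the defining property \eqref{eq:careful_choice_of_gamma} of our chosen basis $\gamma$ is bounded by $r(z_i^{*}+1)$. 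Since $P_{i-1} \subseteq B$, we also get $w_{\qq_i}(P_{i-1}) \leq w_{\qq_i}(B) \leq r \cdot w_{\qq_i}(S_i)$.

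Then I would run the induction: at each step, either $S_i \supseteq P_{i-1}$, in which case $P_i = P_{i-1}$ and $\vol_{n-m}(P_i)/\vol_{n-m}(P_{i-1}) = 1 \geq 1/r$; or $S_i$ does not contain $P_{i-1}$, and Proposition \ref{prop:lower_bound_volume_slab} gives
\[
\vol_{n-m}(P_i) \;\geq\; \frac{w_{\qq_i}(S_i)}{w_{\qq_i}(P_{i-1})}\,\vol_{n-m}(P_{i-1}) \;\geq\; \frac{1}{r}\,\vol_{n-m}(P_{i-1}).
\]
Telescoping over the $d$ steps $i = 1,\dots,d$ (recall $d = m - |\tau|$) produces $\vol_{n-m}(P) \geq r^{-d}\vol_{n-m}(B)$, which is the claim.

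The main obstacle is that Proposition \ref{prop:lower_bound_volume_slab} requires its slab to be centered at the centre of the body; the only delicate point is checking that the particular slabs $S_i$ defined here are in fact centered at $\cc$ and that the specific basis $\gamma$ supplied by Proposition \ref{prop:There_exists_a_basis} forces $w_{\qq_i}(B) \leq r(z_i^{*}+1)$. Everything else is a clean induction once these two facts are in place.
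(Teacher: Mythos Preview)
Your proposal is correct and follows essentially the same argument as the paper: bound the volume loss at each step $P_{i-1}\to P_i$ by $1/r$ via Proposition~\ref{prop:lower_bound_volume_slab}, using $P_{i-1}\subseteq B$ and the basis property \eqref{eq:careful_choice_of_gamma} to control the width ratio, then telescope. If anything, you are slightly more explicit than the paper in verifying that each $P_{i-1}$ is centrally symmetric about $\cc=\tfrac12\zz_{\bar\gamma}^*$ and that each $S_i$ is centred there, which is exactly the hypothesis Proposition~\ref{prop:lower_bound_volume_slab} needs.
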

\begin{proof}
Suppose $i\in\gamma \backslash \tau$. If $S_{i}$ contains $P_{i-1}$ then $P_{i-1}=P_{i}$,
and hence
\[
\vol_{n-m}(P_{i})=\vol_{n-m}(P_{i-1}).
\]
Otherwise, define
\[
\lambda_{i}:=\frac{w_{\qq_{i}}(S_{i})}{w_{\qq_{i}}(P_{i-1})}.
\]
The fact that $\bar{\tau}\subseteq\supp(\zz^{*})$ implies $z_{i}^{*}\geq1$
for all $i\in\gamma\backslash\tau$. Applying Proposition \ref{prop:There_exists_a_basis},
we get
\[
\lambda_{i}\geq\frac{w_{\qq_{i}}(S_{i})}{w_{\qq_{i}}(B)}=\frac{\qq_{i}^{\t}\zz_{\bar{\gamma}}^{*}+1}{\sum_{j\in\bar{\gamma}}\left|q_{i,j}(z_{j}^{*}+1)\right|}=\frac{z_{i}^{*}+1}{\sum_{j\in\bar{\gamma}}\left|q_{i,j}(z_{j}^{*}+1)\right|}\geq\frac{z_{i}^{*}+1}{r(z_{i}^{*}+1)}=\frac{1}{r}.
\]
Since $S_{i}$ does not contain $P_{i-1}$, Proposition \ref{prop:lower_bound_volume_slab}
applies, and so
\[
\vol_{n-m}(P_{i})\geq\frac{w_{\qq_{i}}(S_{i})}{w_{\qq_{i}}(P_{i-1})}\vol_{n-m}(P_{i-1})=\lambda_{i}\vol_{n-m}(P_{i-1})\geq\frac{1}{r}\vol_{n-m}(P_{i-1}).
\]
Applying induction to the sequence of polytopes $P=P_d,\ldots,P_1,P_0=B$, we get
\[
\vol_{n-m}(P)\geq\frac{1}{r^{d}}\vol_{n-m}(B)=\frac{1}{r^{d}}\prod_{j\in\bar{\gamma}}(z_{j}^{*}+1).\qedhere
\]
\end{proof}
\begin{lemma}
\label{lem:upper_bound_polyhedron_volume}Assume $\bar{\tau}\subseteq\supp(\zz^{*})$.
Then
\[
\vol_{n-m}(P)\leq\frac{\left|\det(A_{\gamma})\right|}{\gcd(A)}.
\]
\end{lemma}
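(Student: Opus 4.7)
The plan is to apply Blichfeldt's lemma (Lemma \ref{lem:Blichfeldt lemma}) to $P$ with respect to the projected lattice $\Lambda(A)=\pi_{\gamma}(\Gamma(A))$, whose determinant equals $|\det(A_{\gamma})|/\gcd(A)$ by \eqref{determinant_via_submatrix}. The first step is to notice that $P$ is centrally symmetric about $\cc:=\zz_{\bar{\gamma}}^{*}/2$: the box $B$ is symmetric about $\cc$, and each slab $S_{i}$ is centred on the hyperplane $\{\qq_{i}^{\t}\yy=\qq_{i}^{\t}\zz_{\bar{\gamma}}^{*}/2\}$, which passes through $\cc$. Consequently $P-P=2(P-\cc)$, and it will suffice to show that $(P-P)\cap\Lambda(A)=\{\zero\}$; then Blichfeldt's lemma delivers exactly the claimed inequality.

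The key geometric identification is the following. Define the pointed cone
\[
C:=\{\yy\in\R^{n-m}:y_{j}\geq 0\text{ for }j\in\bar{\gamma},\ \qq_{i}^{\t}\yy\geq 0\text{ for }i\in\gamma\setminus\tau\}.
\]
Using \eqref{structure_of_projection} together with the identity $\qq_{i}^{\t}\yy=x_{i}-x_{i}^{*}$, which for $i\in\gamma\setminus\tau$ reduces to $x_{i}$ because $x_{i}^{*}=0$, the bijection $\pi_{\gamma}|_{\Sp(A,\bb)}$ carries $\CP_{\tau}(A,\bb)$ onto $\sail(\Lambda(A,\bb),C)$: the constraints $\xx_{\bar\tau}\ge\zero$ translate precisely into the defining inequalities of $C$. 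Therefore $\zz_{\bar{\gamma}}^{*}$ is a vertex of $\sail(\Lambda(A,\bb),C)$, and Lemma \ref{t:bound_for_irreducible_points_cone} yields the irreducibility relation
\[
(-\zz_{\bar{\gamma}}^{*}+C)\cap(\zz_{\bar{\gamma}}^{*}-C)\cap\Lambda(A)=\{\zero\}.
\]

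The final step is to verify that every $\lambda\in(P-P)\cap\Lambda(A)$ lies in the intersection above. Writing $\lambda=\yy_{1}-\yy_{2}$ with $\yy_{1},\yy_{2}\in P$, the strict inequalities defining $B$ and each $S_{i}$ give $|\lambda_{j}|<z_{j}^{*}+1$ for $j\in\bar{\gamma}$ and $|\qq_{i}^{\t}\lambda|<z_{i}^{*}+1$ for $i\in\gamma\setminus\tau$. Since $\lambda\in\Lambda(A)\subset\Z^{n-m}$ each $\lambda_{j}$ is an integer, and the characterisation of $\Lambda(A)$ as $\{\mu\in\Z^{n-m}:A_{\gamma}^{-1}A_{\bar{\gamma}}\mu\in\Z^{m}\}$ forces each $\qq_{i}^{\t}\lambda=-(A_{\gamma}^{-1}A_{\bar{\gamma}}\lambda)_{i}$ to be an integer as well. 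These strict inequalities therefore sharpen to $|\lambda_{j}|\leq z_{j}^{*}$ and $|\qq_{i}^{\t}\lambda|\leq z_{i}^{*}$, which is exactly the condition $\lambda\in(-\zz_{\bar{\gamma}}^{*}+C)\cap(\zz_{\bar{\gamma}}^{*}-C)$. Irreducibility then forces $\lambda=\zero$, and Blichfeldt's lemma completes the argument. The main subtlety is recognising that the half-integer offsets in the definition of $P$ are tuned precisely so that integrality of the lattice coordinates collapses the open width-one bounds to the closed cone condition appearing in Lemma \ref{t:bound_for_irreducible_points_cone}; once the cone $C$ is identified, no further estimates are needed.
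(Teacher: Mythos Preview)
Your argument is correct and follows the same route as the paper: apply Blichfeldt's lemma to $P$ and $\Lambda(A)$, use the symmetry of $P$ to reduce to showing $(P-P)\cap\Lambda(A)=\{\zero\}$, and then combine the integrality of the quantities $\lambda_j$ and $\qq_i^{\t}\lambda$ with the strict width bounds to land in $(-\zz_{\bar\gamma}^{*}+C)\cap(\zz_{\bar\gamma}^{*}-C)$, where irreducibility from Lemma~\ref{t:bound_for_irreducible_points_cone} finishes. Your write-up is slightly more explicit than the paper's in justifying why $\zz_{\bar\gamma}^{*}$ is a vertex of $\sail(\Lambda(A,\bb),C)$ (via the bijection $\pi_{\gamma}|_{\Sp(A,\bb)}$) and in spelling out the characterisation $\Lambda(A)=\{\mu\in\Z^{n-m}:A_{\gamma}^{-1}A_{\bar\gamma}\mu\in\Z^{m}\}$, but the content is the same.
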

\begin{proof}
Recall we defined the lattice $\Lambda(A)=\pi_{\gamma}(\ker(A)\cap\Z^{n})$,
whose determinant is given by $\left|\det(A_{\gamma})\right|/\gcd(A)$
by \eqref{determinant_via_submatrix}. We show that $(P-P)\cap\Lambda(A)=\{\zero\}$.
The conclusion then follows from Lemma \ref{lem:Blichfeldt lemma}.

Suppose that $\uu,\vv\in P$ and $\uu-\vv\in\Lambda(A)$. Since $P$
is symmetric, $P-P$ is the origin-symmetric translate of $2P$, and
therefore
\begin{equation}
\begin{gathered}-\zz_{\bar{\gamma}}^{*}-\one_{n-m}<\uu-\vv<\zz_{\bar{\gamma}}^{*}+\one_{n-m}\\
-\qq_{i}^{\t}\zz_{\bar{\gamma}}^{*}-1<\qq_{i}^{\t}(\uu-\vv)<\qq_{i}^{\t}\zz_{\bar{\gamma}}^{*}+1\text{ for all \ensuremath{i\in\gamma\backslash\tau}}.
\end{gathered}
\label{eq:x_minus_y_open_bounds}
\end{equation}
The lattice $\Lambda(A)$ can be characterized as the set of points
$\xx\in\Z^{n-m}$ such that $\qq_{i}^{\t}\xx\in\Z$ for
each $i\in\gamma$. Hence, the inequalities from (\ref{eq:x_minus_y_open_bounds})
imply
\[
\begin{gathered}-\zz_{\bar{\gamma}}^{*}\leq\uu-\vv\leq\zz_{\bar{\gamma}}^{*}\\
-\qq_{i}^{\t}\zz_{\bar{\gamma}}^{*}\leq\qq_{i}^{\t}(\uu-\vv)\leq\qq_{i}^{\t}\zz_{\bar{\gamma}}^{*}\text{ for all \ensuremath{i\in\gamma\backslash\tau}}.
\end{gathered}
\]
In particular, $\uu-\vv$ lies in the polyhedron $(-\zz_{\bar{\gamma}}^{*}+C)\cap(\zz_{\bar{\gamma}}^{*}-C)$,
where
\[
C:=\left\{ \xx\in\R^{n-m}:\xx\geq0,\;\qq_{i}^{\t}\xx\geq0\text{ for all \ensuremath{i\in\gamma\backslash\tau}}\right\} .
\]
By assumption, $\zz_{\bar{\gamma}}^{*}$ is a vertex of the sail $\sail(\Lambda(A,\bb),C)$.
Hence, $\zz_{\bar{\gamma}}^{*}$ is irreducible by Lemma \ref{t:bound_for_irreducible_points_cone},
and therefore $\uu=\vv$.
\end{proof}

\subsection{Proof of Theorem \ref{thm:upper_bound_product_coords_extension}}

Let $\mu=\tau\cup\supp(\zz^{*})$, which we may assume without loss of generality is given by $\mu=\{1,2,\ldots,\left|\mu\right|\}$, and let $A'_{\mu}$ be any full
row rank integer matrix with the same rowspace as $A_{\mu}$. We have
that $\xx_{\mu}^{*}$ is a basic feasible solution of the system 
\be
A_{\mu}'\xx_{\mu}=\bb',\;\xx_{\mu}\geq\zero,
\label{eq:reduced_system}
\ee
where $\bb':=A_{\mu}'\xx_{\mu}^{*}$. Moreover, letting
\[
\CP_{\tau}(A_{\mu}',\bb') := \conv(\{\xx_{\mu}\in\Z^{\left|\mu\right|}:A_{\mu}'\xx_{\mu}=\bb',\;\xx_{\mu\backslash\tau}\geq\zero\}),
\]
we have that $\CP_{\tau}(A_{\mu}',\bb')\times\{\zero\}^{\bar{\mu}}$
is the face of $\CP_{\tau}(A,\bb)$ for which the constraints $\xx_{\bar{\mu}}\geq\zero$ are tight, and this face contains $\zz^{*}=(\zz_{\mu}^{*},\zero)$. Hence, $\zz_{\mu}^{*}$ is a vertex of $\CP_{\tau}(A_{\mu}',\bb')$.
We may therefore apply the above results to $\xx_{\mu}^{*},\zz_{\mu}^{*},$
and the system \eqref{eq:reduced_system}. Let $\sigma\subseteq\mu$
be a basis of $A_{\mu}'$ containing $\tau$ satisfying \eqref{eq:careful_choice_of_gamma}, whose existence is guaranteed by \eqref{eq:careful_choice_of_gamma}. Then Lemmas \ref{lem:lower_bound_volume}
and \ref{lem:upper_bound_polyhedron_volume} imply
\[
\prod_{j\in\bar{\sigma}}(\zz_{j}^{*}+1)\leq r^{\left|\sigma\right|-\left|\tau\right|}\,\frac{\left|\det(A_{\sigma}')\right|}{\gcd(A_{\mu}')}.
\]
Now let $\gamma$ be a basis of $A$ containing $\sigma$. Then $\mu$
and $\gamma\backslash\sigma$ partition $\mu\cup\gamma$. Up to invertible
row operations, we can write
\[
A_{\mu\cup\gamma}=\begin{pmatrix}A_{\mu}' & A_{\gamma\backslash\sigma}'\\
\zero & A_{\gamma\backslash\sigma}''
\end{pmatrix}=\begin{pmatrix}A_{\mu\backslash\sigma}' & A_{\sigma}' & A_{\gamma\backslash\sigma}'\\
\zero & \zero & A_{\gamma\backslash\sigma}''
\end{pmatrix},
\]
where both $A_{\sigma}'$ and $A_{\gamma\backslash\sigma}''$ are
both invertible. Now, every nonzero maximal subdeterminant of $A_{\mu\cup\gamma}$
is the product of $\det(A_{\gamma\backslash\sigma}'')$ with a maximal
subdeterminant of $A_{\mu}'$. It follows that
\[
\gcd(A_{\mu\cup\gamma})=\bigl|\det(A_{\gamma\backslash\sigma}'')\bigr|\cdot\gcd(A_{\mu}'),
\]
and hence
\[
\frac{\bigl|\det(A_{\sigma}')\bigr|}{\gcd(A_{\mu}')}=\frac{\bigl|\det(A_{\sigma}')\bigr|\cdot\bigl|\det(A_{\gamma\backslash\sigma}'')\bigr|}{\gcd(A_{\mu\cup\gamma})}=\frac{\bigl|\det(A_{\gamma})\bigr|}{\gcd(A_{\mu\cup\gamma})}.
\]
We conclude
\[
\prod_{j\in\bar{\gamma}}(\zz_{j}^{*}+1)\leq\prod_{j\in\bar{\sigma}}(\zz_{j}^{*}+1)\leq r^{\left|\sigma\right|-\left|\tau\right|}\,\frac{\bigl|\det(A_{\gamma})\bigr|}{\gcd(A_{\mu\cup\gamma})}\leq r^{d}\,\frac{\bigl|\det(A_{\gamma})\bigr|}{\gcd(A)}.\qedhere
\]

\section{Proof of Theorem \ref{t:IPTP_positive_knapsack}}

First we will show that the knapsack polytope $\KP({\ve a}, b)$ contains a vertex of the corner polyhedron
$\CP_{\gamma}({\ve a}, {b}) $.  Let ${\ve z}^*$ be a vertex of $\CP_{\gamma}({\ve a}, {b}) $ that gives an optimal solution to the linear program 
\bea \max\{x_1: {\ve x}=(x_1, \ldots, x_n)^{\t} \in \CP_{\gamma}({\ve a}, {b}) \}\,.\eea
By definition of $\CP_{\gamma}({\ve a}, {b})$ the vertex ${\ve z}^*$ is in $\KP({\ve a}, b)$ if and only if $z_1^*\ge 0$.
Since $\KP({\ve a}, b)\subset \CP_{\gamma}({\ve a}, {b}) $, it is now sufficient to choose any integer point  ${\ve z}=(z_1, \ldots, z_n)^{\t}\in \KP({\ve a}, b)$ and observe that 
$z_1^*\ge z_1\ge 0$.

Applying Theorem  \ref{t:IPTP_general} with the vertex ${\ve z}^*\in \KP({\ve a}, b)$ we immediately  obtain \eqref{eq:r_equals_0_knapsack}
and \eqref{eq:r_equals_1_knapsack}. Further,  the bound \eqref{eq:r_greater_1} implies for $r\ge 2$ the non-strict inequality 
\begin{equation}\label{eq:r_greater_1_knapsack_nonstrict}
\|{\ve x}^{*}-{\ve z}^{*}\|_{\infty} \,\frac{2^{r}}{r} \le  \|{\ve a}\|_{\infty}\,.
\end{equation}
To show that \eqref{eq:r_greater_1_knapsack_nonstrict} is strict (and hence that \eqref {eq:r_greater_1_knapsack} holds), it is sufficient to prove that the bound 
\eqref{Delta_upper_bound_r_ge_2} in the proof of Theorem \ref{t:IPTP_general}  is strict in the knapsack scenario.
Specifically, we need to prove that for the vertex ${\ve z}^*$ 
\be\label{lincomb_via_sum_knapsack}
\delta= |x^*_1-z^*_1|< \frac{(z_2^*+\cdots+z^*_{n})\|{\ve a}\|_{\infty}}{a_1}\,.
\ee

Set $A=(a_1, \ldots, a_n)\in\Z^{1\times n}$  and consider the affine lattice $\Lambda({\ve a},{b}):=\Lambda(A,{b})$.
We can write
\begin{equation}\label{affine_congruence}
\Lambda({\ve a}, b)= \{(\lambda_2, \ldots, \lambda_n)^{\t}\in \Z^{n-1}: \lambda_2 a_2+\cdots+ \lambda_{n-1}a_{n-1}\equiv b\, (\modulo a_1)\}\,.
\end{equation}
Following \eqref{structure_of_projection},  the map $\pi_{\gamma} |_{\Sp({A,{b}})}$, with $\gamma=\{1\}$ in the knapsack scenario, is a bijection. It follows that the point ${\ve y}^*=\pi_{\gamma}({\ve z}^{*})$ is a vertex of the sail $\sail(\Lambda({\ve a}, {b}))$.

Suppose, to derive a contradiction, that the equality 
\be\label{contr_equal} \delta=\frac{(z_2^*+\cdots+z_{n}^*)\|{\ve a}\|_{\infty}}{a_1}\ee
holds. By \eqref{Cramer} we have
\bea
 \delta=\frac{z^{*}_{2}a_2+ \cdots + z^{*}_{n}a_n}{a_1}
\eea
 and, consequently, \eqref{contr_equal}  implies $a_2=\cdots=a_n=\|{\ve a}\|_{\infty}$. Therefore, using \eqref{affine_congruence}, the affine lattice $\Lambda({\ve a}, b)$ contains the points 
\be \label{simplex}
(z_2^*+\cdots+z_{n}^*){\ve e}_j\,,\; j\in \{1,\ldots, n-1\}\,.
\ee 
The point ${\ve y}=(z_2^*, \ldots, z^*_{n})^{\t}$, in its turn, belongs to the simplex with vertices \eqref{simplex} and has $\|{\ve y}\|_0=r\ge 2$.
Therefore ${\ve y}$ cannot be a vertex of the sail $\sail(\Lambda({\ve a},{b}))$. The derived contradiction completes the proof of Theorem \ref{t:IPTP_positive_knapsack}.

\section{Proof of Corollary \ref{coro_upper_bound}}

By Theorem \ref{t:IPTP_positive_knapsack} the knapsack polytope $\KP({\ve a}, b)$ contains a vertex ${\ve z}^{*}$ of $\CP_{\gamma}({\ve a}, {b}) $.
Therefore
\be\label{gap-distance}
IG({\ve c}, {\ve a}, {b})\le \|{\ve x}^{*}-{\ve z}^{*}\|_{\infty}\sum_{i\in\supp({\ve x}^{*}-{\ve z}^{*})}|c_i|\,.
\ee
If $r=0$ we have ${\ve x}^*={\ve z}^{*}$ that justifies \eqref{eq:r_equals_0_gap}.
Further,  \eqref{gap-distance} implies  the bound
\bea
IG({\ve c}, {\ve a}, {b})\le (r+1)\|{\ve x}^{*}-{\ve z}^{*}\|_{\infty}\|{\ve c}\|_{\infty}
\eea
that immediately gives \eqref{eq:r_equals_1_gap} and \eqref{eq:r_greater_1_gap}.

\section{Proof of Theorem \ref{t:IPTP_positive_knapsack_sharpness}}

For  $s\ge 2$ set
$
{\ve a}^{(s)}=(2^{s-1}, 2^{s-2}, \ldots,  1)^{\t}
$
and
$b^{(s)}= {\ve 1}^{\t}_s{\ve a}^{(s)}=2^{s}-1$. Let $\KP_I({\ve a}^{(s)}, {b}^{(s)})= \conv (\KP({\ve a}^{(s)}, {b}^{(s)})\cap \Z^s)$ be the {integer hull} of the knapsack polytope $\KP({\ve a}^{(s)}, b^{(s)})$.

We will need the following observations. 
\begin{lemma}\label{vertex_integer_hull}
The point ${\ve 1}_{s}$ is a vertex of the polytope $\KP_I({\ve a}^{(s)}, {b}^{(s)})$.
\end{lemma}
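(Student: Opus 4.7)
The plan is to exhibit an integer linear functional that is uniquely minimized at ${\ve 1}_{s}$ among the integer points of $\KP({\ve a}^{(s)}, b^{(s)})$; this immediately implies that ${\ve 1}_{s}$ is a vertex of the integer hull. The natural candidate is the support-counting functional $f({\ve x}) := x_1 + x_2 + \cdots + x_s$. First observe that $({\ve a}^{(s)})^{\t}{\ve 1}_{s} = 2^{s-1}+2^{s-2}+\cdots+1 = 2^s-1 = b^{(s)}$, so ${\ve 1}_{s}$ is a feasible integer point with $f({\ve 1}_{s})=s$.

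The key step is a greedy/carrying argument showing that any integer minimizer ${\ve x}$ of $f$ over $\KP({\ve a}^{(s)}, b^{(s)}) \cap \Z^s$ must satisfy $x_i \in \{0,1\}$ for every $i$. If $x_1 \ge 2$, then $({\ve a}^{(s)})^{\t}{\ve x} \ge 2\cdot 2^{s-1} = 2^s > b^{(s)}$, which is infeasible. If $x_i \ge 2$ for some $i\ge 2$, set ${\ve x}' := {\ve x} - 2{\ve e}_i + {\ve e}_{i-1}$; this remains nonnegative and integer, and $({\ve a}^{(s)})^{\t}{\ve x}' = ({\ve a}^{(s)})^{\t}{\ve x} - 2\cdot 2^{s-i} + 2^{s-i+1} = b^{(s)}$, while $f({\ve x}') = f({\ve x}) - 1$, contradicting minimality.

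Once a minimizer is known to lie in $\{0,1\}^s$, uniqueness of the binary representation of $2^s-1$ (which uses all $s$ powers $2^{0},\ldots,2^{s-1}$) forces the minimizer to be exactly ${\ve 1}_{s}$. Hence ${\ve 1}_{s}$ is the unique integer point of $\KP({\ve a}^{(s)}, b^{(s)})$ minimizing the linear functional $f$, so it is a vertex of $\KP_I({\ve a}^{(s)}, b^{(s)})$. The argument is elementary; the only point to check carefully is that the carry operation preserves nonnegativity, which is immediate from the hypothesis $x_i \ge 2$.
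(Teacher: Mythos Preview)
Your proof is correct and takes a genuinely different route from the paper's. The paper proceeds by induction on $s$: it observes that every feasible integer point has $z_1\le 1$, so ${\ve 1}_s$ lies on the face $\{x_1=1\}$ of $\KP_I({\ve a}^{(s)},b^{(s)})$; if ${\ve 1}_s$ were not a vertex it would be a convex combination of other integer points on that face, and stripping off the first coordinate would exhibit ${\ve 1}_{s-1}$ as a non-vertex of $\KP_I({\ve a}^{(s-1)},b^{(s-1)})$, contradicting the inductive hypothesis. Your argument instead works directly for each $s$: you minimize the linear functional $f({\ve x})=x_1+\cdots+x_s$ over the feasible integer points, use a binary ``carry'' move ${\ve x}\mapsto {\ve x}-2{\ve e}_i+{\ve e}_{i-1}$ to force any minimizer into $\{0,1\}^s$, and then invoke uniqueness of the binary expansion of $2^s-1$. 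Both arguments are short and elementary; yours has the minor advantage of producing an explicit supporting hyperplane (the one with normal ${\ve 1}_s$) at the vertex, while the paper's induction more transparently exposes the recursive structure $\KP_I({\ve a}^{(s)},b^{(s)})\cap\{x_1=1\}\cong \KP_I({\ve a}^{(s-1)},b^{(s-1)})$. One cosmetic remark: calling $f$ the ``support-counting functional'' is a slight misnomer, since $f({\ve x})=\|{\ve x}\|_0$ only on $\{0,1\}$-vectors; it is simply the $\ell_1$-norm here.
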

\begin{proof}
We will use induction on $s$. The basis step $s=2$ holds as there are only two integer points ${\ve 1}_2$ and $(0,3)^{\t}$
in the polytope $\KP({\ve a}^{(2)}, {b}^{(2)})$.
To verify the inductive step, suppose that the result does not hold for some $s\ge 3$. Observe that any integer point ${\ve z}=(z_1, \ldots, z_s)^{\t}\in \KP({\ve a}^{(s)}, {b}^{(s)})$ has $z_1\le 1$. Consequently, ${\ve 1}_s$ belongs to the face $\KP_I({\ve a}^{(s)}, {b}^{(s)})\cap\{{\ve x}\in \R^s: x_1=1\}$ of the polyhedron $\KP_I({\ve a}^{(s)}, {b}^{(s)})$. Hence ${\ve 1}_s$ is a convex combination of some integer points in $\KP({\ve a}^{(s)}, { b}^{(s)})$ that have the first entry $1$.  Therefore, removing the first entry we obtain a convex combination of integer points from $\KP({\ve a}^{(s-1)}, {\ve b}^{(s-1)})$ equal to  ${\ve 1}_{s-1}$. The obtained contradiction completes the proof. 
\end{proof}

For the rest of the proof we assume $s\ge 3$.
\begin{lemma}\label{vertex_sail}
The point ${\ve 1}_{s-1}$ is a vertex of the sail $\sail(\Lambda({\ve a}^{(s)}, {\ve b}^{(s)}))$.
\end{lemma}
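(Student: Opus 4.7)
The plan is to reduce the claim to showing that $\ve{1}_s$ is a vertex of the corner polyhedron $\CP_\gamma(\ve{a}^{(s)}, b^{(s)})$. Since the affine bijection $\pi_\gamma|_{\Sp(\ve{a}^{(s)\top}, b^{(s)})}$ from \eqref{structure_of_projection} maps $\CP_\gamma(\ve{a}^{(s)}, b^{(s)})$ onto $\sail(\Lambda(\ve{a}^{(s)}, b^{(s)}))$ and sends $\ve{1}_s$ to $\ve{1}_{s-1}$, any vertex of $\CP_\gamma$ projects to a vertex of the sail. The strategy is then to exhibit a face of $\CP_\gamma(\ve{a}^{(s)}, b^{(s)})$ containing $\ve{1}_s$ which is naturally identified with $\KP_I(\ve{a}^{(s-1)}, b^{(s-1)})$, and then invoke Lemma \ref{vertex_integer_hull} at index $s - 1\ge 2$.

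First I would establish the inequality $x_1 \leq 1$ on the whole of $\CP_\gamma(\ve{a}^{(s)}, b^{(s)})$. For any integer point $\ve{z}$ with $\ve{a}^{(s)\top} \ve{z} = b^{(s)}$ and $z_2, \ldots, z_s \ge 0$, one has $2^{s-1} z_1 = 2^s - 1 - \sum_{i=2}^s 2^{s-i} z_i \leq 2^s - 1 < 2^s$, so $z_1 \leq 1$; since $\CP_\gamma(\ve{a}^{(s)}, b^{(s)})$ is the convex hull of such integer points, the inequality extends to the whole polyhedron. Consequently, $F := \CP_\gamma(\ve{a}^{(s)}, b^{(s)}) \cap \{x_1 = 1\}$ is a face of $\CP_\gamma(\ve{a}^{(s)}, b^{(s)})$.

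The next step is to identify $F$ with the integer hull of a smaller knapsack. The integer points of $\CP_\gamma$ with $z_1 = 1$ are precisely those of the form $(1, z_2, \ldots, z_s)$ with $(z_2, \ldots, z_s)$ a nonnegative integer solution to $\sum_{i=2}^s 2^{s-i} z_i = b^{(s)} - 2^{s-1} = 2^{s-1} - 1 = b^{(s-1)}$, i.e., lattice points of $\KP(\ve{a}^{(s-1)}, b^{(s-1)})$. Using the standard fact that a face of a rational polyhedron equal to its own integer hull is itself equal to its own integer hull, we conclude $F = \{1\} \times \KP_I(\ve{a}^{(s-1)}, b^{(s-1)})$. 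By Lemma \ref{vertex_integer_hull} applied at index $s - 1$, the point $\ve{1}_{s-1}$ is a vertex of $\KP_I(\ve{a}^{(s-1)}, b^{(s-1)})$, so $\ve{1}_s = (1, \ve{1}_{s-1})$ is a vertex of $F$, hence a vertex of $\CP_\gamma(\ve{a}^{(s)}, b^{(s)})$. The bijection $\pi_\gamma$ then yields that $\ve{1}_{s-1}$ is a vertex of the sail, completing the proof.

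The only mildly nontrivial point is the identification $F = \conv(F \cap \Z^s)$: this follows at once because any $\ve{x} \in F \subseteq \CP_\gamma$ can be written as a convex combination of integer points of $\CP_\gamma$, and since $F$ is a face, every such integer point must itself lie in $F$. So there is no real obstacle; the proof is essentially a clean face-reduction combined with the preceding lemma.
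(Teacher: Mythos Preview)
Your argument is correct and is essentially the paper's own proof, transported through the affine bijection $\pi_\gamma|_{\Sp}$: the paper works directly in $\R^{s-1}$, exhibiting the supporting hyperplane $\mathcal H=\{2^{s-2}x_2+\cdots+x_s=2^{s-1}-1\}$ of the sail and identifying $\mathcal H\cap\sail$ with $\KP_I(\ve a^{(s-1)},b^{(s-1)})$, whereas you work in $\R^s$ with the corner polyhedron and the face $\{x_1=1\}$, which is precisely the preimage of $\mathcal H$ under that bijection. The face-reduction plus appeal to Lemma~\ref{vertex_integer_hull} is identical in both versions.
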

\begin{proof}

Using  \eqref{affine_congruence}, the affine lattice $\Lambda({\ve a}^{(s)}, b^{(s)})$ can be written as
\begin{equation*}\label{affine_congruence_sharpness}
\Lambda({\ve a}^{(s)}, b^{(s)})= \{{\ve x}\in \Z^{s-1}:  2^{s-2}x_2+ \cdots+ x_{s}\equiv -1\, (\modulo 2^{s-1})\}\,.
\end{equation*}
Therefore
\bea \label{simplex_boundary}
{\mathcal H}= \{{\ve x}\in \R^{s-1}: 2^{s-2}x_2+ \cdots+ x_{s}= 2^{s-1}-1\}\,
\eea
is a supporting hyperplane of $\sail(\Lambda({\ve a}^{(s)}, { b}^{(s)}))$. Consequently,
\bea\KP_I({\ve a}^{(s-1)}, {b}^{(s-1)})={\mathcal H}\cap \sail(\Lambda({\ve a}^{(s)}, { b}^{(s)}))
\eea
is a face of $\sail(\Lambda({\ve a}^{(s)}, { b}^{(s)}))$.
The result now follows by Lemma \ref{vertex_integer_hull}.

\end{proof}

For a positive integer $t$ set
\bea
{\ve a}^{(s)}(t)= (a^{(s)}_1(t), \ldots, a^{(s)}_s(t))^{\t}=(2^{s-1}, 2^{s-2}+t2^{s-1},\ldots, 1+t 2^{s-1})^{\t}
\eea
and
$b^{(s)}(t)= {\ve 1}^{\t}_s{\ve a}^{(s)}(t)=2^{s} + (s-1)t2^{s-1}-1$. Consider the vertex ${\ve v}^{(s)}(t)=(b^{(s)}(t)/a_1^{(s)}(t)){\ve e}_1$ of the knapsack polytope $\KP({\ve a}^{(s)}(t), b^{(s)}(t))$.

In view of  \eqref{affine_congruence}, we have $\Lambda({\ve a}^{(s)}, b^{(s)})= \Lambda({\ve a}^{(s)}(t), b^{(s)}(t))$. Therefore, by Lemma \ref{vertex_sail}
the point ${\ve 1}_{s-1}$ is a vertex of the sail $\sail(\Lambda({\ve a}^{(s)}(t), { b}^{(s)}(t)))$. Observe that the sail $\sail(\Lambda({\ve a}^{(s)}(t), { b}^{(s)}(t)))$ is the image of the corner polyhedron $\CP_{{\ve v}^{(s)}(t)}({\ve a}^{(s)}, b^{(s)})$ under the bijective linear map $\pi_{\gamma}|_{\Sp({\ve a}^{(s)}(t), b^{(s)}(t))}(\cdot)$. Using \eqref{structure_of_projection}, the point
\bea
{\ve 1}_s=\pi^{-1}_{\gamma}|_{\Sp({\ve a}^{(s)}(t), b^{(s)}(t))}({\ve 1}_{s-1})
\eea
is a feasible vertex of $\CP_{{\ve v}^{(s)}(t)}({\ve a}^{(s)}, b^{(s)})$.
Note also that  ${\ve 1}_s\in \KP({\ve a}^{(s)}(t), {b}^{(s)}(t))$.

It is now sufficient to show that  for any $\epsilon>0$ 
\be\label{sharp_a_}
\|{\ve v}^{(s)}(t)-{\ve 1}_s\|_{\infty}\frac{2^{s-1}}{s-1}>(1-\epsilon) \|{\ve a}^{(s)}(t)\|_{\infty}
\ee
for sufficiently large $t$. We have
\bea\label{sharp_a_1}
\|{\ve v}^{(s)}(t)-{\ve 1}_s\|_{\infty}=\left|\frac{b^{(s)}(t)}{a_1^{(s)}(t)}-1\right | = (s-1)t+1-\frac{1}{2^{s-1}}\,.
\eea
Finally,
\bea
\frac{\|{\ve v}^{(s)}(t)-{\ve 1}_s\|_{\infty}}{\|{\ve a}^{(s)}(t)\|_{\infty}}= \frac{(s-1)t+1-2^{-(s-1)}}{2^{s-2}+t2^{s-1}}\longrightarrow\frac{s-1}{2^{s-1}}
\eea
as $t\rightarrow\infty$, that implies (\ref{sharp_a_}).

\section{Proof of Theorem \ref{thm:UpperBound}}

We will apply the following result by Bombieri and Vaaler \cite{BombVaal}.

\begin{thm}[{\cite[Theorem~2]{BombVaal}}]\label{eq:sl_sl_f}
Let $A\in \Z^{m\times n}$, $m<n$, be a matrix of rank $m$. There exist $n - m$ linearly independent integral vectors ${\ve y}_1, \ldots,{\ve y}_{n-m} \in \Gamma(A)$ satisfying
\begin{equation*}
 \prod_{i=1}^{n-m} \| {\ve y}_i\|_{\infty} \le \frac{\sqrt{{\rm det}(AA^{T})}}{\gcd(A)}\,.
 \end{equation*}
\end{thm}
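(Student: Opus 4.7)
The plan is to select a well-chosen optimal integer solution $\ve z^*$ of \eqref{eq:mainProblem}, apply Theorem~\ref{eq:sl_sl_f} to the submatrix $A_\tau$ on the support $\tau=\supp(\ve z^*)$ to obtain $s-m$ short integer kernel vectors, and argue via a lexicographic minimality argument that each of these vectors has $\ell_\infty$-norm at least $\rho(\ve z^*)+1$. Combining this with a Cauchy--Binet comparison that relates $A_\tau$ back to $A$ will deliver the inequality \eqref{t:Bound_via_transference}.

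First I would let $\ve z^*$ be the lexicographically smallest optimal integer solution of \eqref{eq:mainProblem}; such a vector exists by iterated coordinate minimization ($\min z_1$, then $\min z_2$ restricted to the preceding optima, and so on), since each $z_k$ is integer-valued and bounded below by $0$, so every successive minimum is attained. Write $\tau=\supp(\ve z^*)$, $s=\|\ve z^*\|_0$, and $\rho=\rho(\ve z^*)$. If $s\le m$ then $(\rho+1)^{s-m}\le 1\le\sqrt{\det(AA^{\t})}/\gcd(A)$ and the bound is immediate, so assume $s>m$.

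The central claim is that every nonzero $\ve y\in\ker(A_\tau)\cap\Z^s$ satisfies $\|\ve y\|_\infty\ge\rho+1$. To prove it, suppose for contradiction $\|\ve y\|_\infty\le\rho$ and let $\tilde{\ve y}\in\Z^n$ be the zero-extension of $\ve y$ outside $\tau$. Since $z^*_j\ge\rho\ge|y_j|$ for all $j\in\tau$, both $\ve z^*\pm\tilde{\ve y}$ are nonnegative integer vectors with $A(\ve z^*\pm\tilde{\ve y})=\ve b$, hence feasible for \eqref{eq:mainProblem}. Optimality of $\ve z^*$ then forces $\ve c^{\t}\tilde{\ve y}=0$, so both $\ve z^*\pm\tilde{\ve y}$ are also optimal; replacing $\ve y$ by $-\ve y$ if necessary so that its first nonzero coordinate is positive, $\ve z^*-\tilde{\ve y}$ becomes an optimal integer solution strictly smaller than $\ve z^*$ in the lex order, contradicting the choice of $\ve z^*$.

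Finally, applying Theorem~\ref{eq:sl_sl_f} to $A_\tau$ (assumed here to have full row rank $m$) produces $s-m$ linearly independent vectors $\ve y_1,\ldots,\ve y_{s-m}\in\ker(A_\tau)\cap\Z^s$ with $\prod_i\|\ve y_i\|_\infty\le\sqrt{\det(A_\tau A_\tau^{\t})}/\gcd(A_\tau)$. The claim gives $(\rho+1)^{s-m}\le\prod_i\|\ve y_i\|_\infty$; and Cauchy--Binet yields $\det(A_\tau A_\tau^{\t})\le\det(AA^{\t})$, while $\gcd(A)\mid\gcd(A_\tau)$ since $\gcd(A)$ divides every maximal minor of $A_\tau$. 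Chaining these inequalities produces \eqref{t:Bound_via_transference}. The main obstacle I anticipate is the full-row-rank hypothesis of Theorem~\ref{eq:sl_sl_f}: when $\rank(A_\tau)<m$, Bombieri--Vaaler does not apply directly to $A_\tau$, and I would route the argument through an auxiliary matrix obtained by stacking $A$ with the standard-basis rows $\ve e_j^{\t}$ for $j\in\bar\tau$, whose integer kernel consists exactly of those $\ve y\in\Gamma(A)$ supported on $\tau$ and whose Hadamard--Gramian determinant simplifies via a Schur complement back to $\det(A_\tau A_\tau^{\t})$.
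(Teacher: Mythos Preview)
Your proposal does not address the stated theorem. Theorem~\ref{eq:sl_sl_f} is the Bombieri--Vaaler bound on short integer kernel vectors; in the paper it is simply quoted from \cite{BombVaal} and used as a black box, with no proof given. What you have written is a proof of Theorem~\ref{thm:UpperBound}, which \emph{applies} Theorem~\ref{eq:sl_sl_f}. So as a proof of the displayed statement, the proposal is off target.

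If your intended target was Theorem~\ref{thm:UpperBound}, then your argument is correct and runs parallel to the paper's, with one genuine difference in the choice of~$\ve z^*$. The paper takes $\ve z^*$ to be an optimal \emph{vertex of the integer hull} $\KP_I(A,\ve b)$; a short kernel vector $\ve y$ with $\|\ve y\|_\infty\le\rho(\ve z^*)$ would then make $\ve z^*=\tfrac12\bigl((\ve z^*+\ve y)+(\ve z^*-\ve y)\bigr)$ a non-vertex, a contradiction. You instead take $\ve z^*$ lexicographically minimal among optimal solutions and reach the same contradiction via lex order. Both devices certify that no short kernel vector exists and are essentially interchangeable; the vertex formulation is marginally cleaner because it does not require the separate observation that $\ve c^\t\tilde{\ve y}=0$.

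On the rank issue you flag at the end: the paper's route is simpler than your proposed auxiliary-matrix construction. It restricts to $\tau=\supp(\ve z^*)$, then explicitly deletes redundant rows of $A_\tau$ to obtain a full-row-rank matrix $\bar A$ of rank $\bar m\le m$, applies Bombieri--Vaaler there, and absorbs the discrepancy via $(\rho+1)^{s-m}\le(\rho+1)^{s-\bar m}$. The determinant/gcd comparison $\sqrt{\det(\bar A\bar A^\t)}/\gcd(\bar A)\le\sqrt{\det(AA^\t)}/\gcd(A)$ is exactly your Cauchy--Binet step, cited in the paper as \cite[Lemma~2.3]{ADON2017}.
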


Let ${\ve z}^*$ be a vertex of the integer hull $\KP_I(A, {\ve b})$ that gives an optimal solution to \eqref{eq:mainProblem}.
We will show that ${\ve z}^*$  satisfies \eqref{t:Bound_via_transference}.
First, we argue that it suffices to consider the case $\|\ve z^*\|_0=n$.
Suppose that $\|\ve z^*\|_0<n$. For $\tau=\supp(\ve z^*)$ set 
$\bar A=A_\tau$, $\bar{\ve b}=\ve b$, $\bar{\ve c}=\ve c_\tau$, and $\bar{\ve z}^*=\ve z^*_\tau$.
By removing linearly dependent rows, we may assume that $\bar A$ has full row rank. Let $\bar m = \rank(\bar A) \le m$.
Observe that  $\bar{\ve z}^*$ is an optimal solution for the corresponding problem \eqref{eq:mainProblem} with minimal support.
Furthermore, note that $\bar{\ve z}^*$ has full support. 
Now, if \eqref{t:Bound_via_transference} holds true for $\bar{\ve z}^*$, then 
\be\label{split_rho_1}
(\rho({\ve z}^*)+1)^{s-m}\le (\rho(\bar{\ve z}^*)+1)^{s-\bar m}\le \frac{\sqrt{\det(\bar A\bar A^{\t})}}{\gcd(\bar A)}\,.
\ee
Further, using \cite[Lemma~2.3]{ADON2017} we have
\be\label{split_rho_2}
\frac{\sqrt{\det(\bar A\bar A^{\t})}}{\gcd(\bar A)}\le \frac{\sqrt{\det(AA^{\t})}}{\gcd(A)}\,.
\ee
Combining \eqref{split_rho_1} and \eqref{split_rho_2}, we obtain \eqref{t:Bound_via_transference}.

From now on, we may assume that $\|\ve z^*\|_0=n$. Suppose, to derive a contradiction, that \eqref{t:Bound_via_transference} does not holds, that is $(\rho({\ve z}^*)+1)^{n-m}> (\gcd(A))^{-1}\sqrt{\det(AA^{\t})}$. By Theorem \ref{eq:sl_sl_f}, there exists a vector $\ve y\in\Z^n\setminus\{\ve 0\}$ such that
$$A\ve y=\ve 0 \;\; \text{ and } \;\; \|\ve y\|_\infty\le\left(\frac{\sqrt{\det(AA^{\t})}}{\gcd(A)}\right)^{\frac{1}{n-m}}<\rho({\ve z}^*)+1.$$
It follows that both $\ve z^* + \ve y$ and $\ve z^* - \ve y$ are in the knapsack polytope $\KP(A, {\ve b})$. Therefore $\ve z^*$ is not a vertex of $\KP_I(A, {\ve b})$.
The obtained contradiction completes the proof.

\bibliographystyle{plain}
\bibliography{bib}

\end{document}